\newcommand{\redsout}{\bgroup\markoverwith{\textcolor{red}{\rule[0.5ex]{2pt}{.4pt}}}\ULon}
\newcommand{\LC}{\left(}
\newcommand{\RC}{\right)}
\newcommand{\p}{\partial}
\numberwithin{equation}{section}
\newtheorem{theorem}{Theorem}[section]
\newtheorem{proposition}{Proposition}[section]
\newtheorem{lemma}{Lemma}[section]
\newtheorem{definition}{Definition}[section]
\newtheorem{remark}{Remark}[section]
\newcommand{\R}{\mathbb R}
\newcommand{\0}{{\bf 0}}
\begin{document}

\author[Lai]{Ru-Yu Lai}
\address{School of Mathematics, University of Minnesota, Minneapolis, MN 55455, USA}
\curraddr{}
\email{rylai@umn.edu }

\author[Zhou]{Hanming Zhou}
\address{Department of Mathematics, University of California Santa Barbara, Santa Barbara, CA 93106-3080, USA}
\curraddr{}
\email{hzhou@math.ucsb.edu}

\thanks{\textbf{Key words}: Inverse problems, boundary determination, global uniqueness, dipole}
 
\title[Global determination for an inverse problem from vortex dynamics]{Global determination for an inverse problem from the vortex dynamics}
\date{\today}

\begin{abstract}
	We consider the problem of reconstructing a background potential from the dynamical behavior of vortex dipole. We prove that under suitable conditions, one can uniquely reconstruct a real-analytic potential by measuring the entrance and exit positions as well as travel times between boundary points.
	In particular, the work removes the flatness assumption on the potential from the earlier result. A key step of our method is a constructional procedure of recovering the boundary jet of the potential.
\end{abstract}

\maketitle

\section{Introduction}
We study the reconstruction of a background potential from the dynamics of vortices. 
We consider a pair of vortices $\{a_+, a_-\}$ of opposite charge (vortex dipole) govern by the following ODE system in $\R^2$:
\begin{align}\label{dipole ode}
\left\{ \begin{array}{rclrcl}
\dot{a}_+(s)   = {1\over \pi} { (a_+ - a_-)^\perp \over |a_+ - a_-|^2 } + \nabla^\perp Q(a_+),\\ [.5em]
\dot{a}_-(s)   = {1\over \pi}  { (a_+ - a_-)^\perp \over |a_+ - a_-|^2 } - \nabla^\perp Q(a_-),
\end{array}\right.
\end{align}
where $\dot{a}_\pm(s):={d\over ds}a_\pm(s)$ and $Q:\mathbb R^2\to \mathbb R$ is the background potential, see \cite{LSSU} for more details about the system.
Here for a function $w:\mathbb{R}^2\rightarrow \mathbb{R}$ and for a vector $u=(u_1,u_2)$ in $\mathbb{R}^2$, we denote 
$$\nabla^\perp w = (\p_2w,\,-\p_1w),\qquad u^\perp=(u_2,\,-u_1),$$ respectively, where $\p_j={\p\over\p x_j}$ for $j=1,2$.
This ODE system \eqref{dipole ode} is linked to an inhomogeneous Gross-Pitaevskii equation in $\R^2$ in a critical asymptotic regime where vortices interact with both the background potential and each other \cite{MTKFCSFH, SKS, SMKS, Torres, torres2011dynamics}. This connection was rigorously proved in \cite{KMS}. Note that these vortex-vortex and vortex-potential interactions have been studied experimentally \cite{freilich2010real, neely2010observation} and numerically \cite{MKFCS}.

The main objective of our work is to study the inverse problem for the reconstruction of the background potential $Q(x)$ from the travel information of the dipole.
The uniqueness result for this inverse problem was first proved in \cite{LSSU} when the potential is sufficiently smooth and flat, indicating the path of the dipole is close to straight lines. Specifically, in \cite{LSSU}, the potential is uniquely identified by using the trajectory of the center of mass of the dipole. A reconstruction formula for the potential and numerical examples are also investigated in \cite{LSSU}.

This paper aims to release the smallness assumption on the gradient of the potential in \cite{LSSU} by considering a more general potential. Instead of using the information of the center of mass as in \cite{LSSU}, we will take measurements of travel trajectories of both vortices ($a_\pm$), including their positions of entrance and exit as well as the travel time.  
The detailed problem setting of this inverse problem is as follows.

\subsection{Problem setup and main results}
Let $\Omega\subset \mathbb R^2$ be a bounded convex open domain with smooth boundary $\p \Omega$, and $Q\in C^\infty(\mathbb R^2)$ be a background potential in $\mathbb R^2$. Let $U$ be an open neighborhood of $\Omega$, i.e. $\Omega\subset\subset U$, so that $U\setminus \Omega$ is a tubular neighborhood around $\Omega$.

For the study of the inverse problem, we propose a measurement map $\mathcal{S}$ on $(\p\Omega\times (U\setminus \Omega))\setminus \Delta$ with respect to (w.r.t.) a background potential $Q$, where $\Delta:=\{(x,x):\, x\in\p\Omega\}$ is a subset of $\p\Omega \times (U\setminus \Omega)$. The definition of $\mathcal{S}$ is discussed as follows.
For any $(x,y)\in(\p\Omega\times (U\setminus \Omega))\setminus \Delta$, we denote $a_\pm(s,x,y)$ to be the solutions of \eqref{dipole ode} with initial values 
$$a_+(0,x,y)=x,\qquad a_-(0,x,y)=y,$$ 
and, moreover, we define the function
\begin{equation}\label{eqn:def_time}
\begin{aligned}
\tau_+:\quad & (\p\Omega\times (U\setminus \Omega))\setminus \Delta & \to\quad&  [0,\infty)\cup \{\infty\}
\end{aligned}
\end{equation}
to be the first nonnegative time when the vortex $a_+(\cdot, x,y)$ exits $\Omega$.
In particular, if $a_+(\cdot, x,y)$ is trapped in $\Omega$, i.e. $a_+(s,x,y)\in \overline\Omega$ for all $s\geq 0$, then we define $\tau_+(x,y)=\infty$.  
Notice that the governing ODEs \eqref{dipole ode} become singular when the vortices $a_\pm$ collide. For the sake of simplicity, we assume that 
$$\tau_+(x,y)<\infty, \quad \forall (x,y)\in (\p\Omega\times (U\setminus \Omega)) \setminus \Delta,$$
and the dipole never collides in $\Omega$. Because of the local nature of our approach discussed in later sections, this assumption indeed does not impose any restriction to the main results of the current paper. 

Now we are ready to define the measurement map $\mathcal S$ as below:
\begin{align*}
\mathcal S:\quad (\p\Omega\times (U\setminus \Omega)) \setminus \Delta \quad \to\quad \big([0,\infty) \times \p\Omega\times (\mathbb R^2\setminus\Omega)\big)\cup \big([0,\infty)\times \p\Omega\big) 
\end{align*}
and
\begin{align}\label{eqn:def_S}
&\mathcal S(x,y) \notag\\
&=\left\{ \begin{array}{ll}
\big(\tau_+(x,y),\, a_+(\tau_+(x,y),x,y),\, a_-(\tau_+(x,y),x,y) \big), \quad \mbox{if}\; a_-(\tau_+(x,y),x,y)\notin \Omega,\\ [.5em]
\big(\tau_+(x,y),\, a_+(\tau_+(x,y),x,y) \big), \quad \mbox{if}\; a_-(\tau_+(x,y),x,y)\in \Omega.
\end{array}\right.
\end{align}
In particular, the definition of $\mathcal S$ says that we don't know the behavior of the dipole inside $\Omega$. As a matter of fact, we will only make use of those points $(x,y)$ such that $a_-(\tau_+(x,y),x,y)\notin\Omega$ in this paper. More specifically, the restriction of $\mathcal S$ on a subset of $(\p\Omega\times (U\setminus \Omega)) \setminus\Delta$ is sufficient for our approach to the reconstruction of the potential.
 
\begin{remark}
We make the following comment regarding the definition \eqref{eqn:def_S}:
If we define the measurement operator $\mathcal S$ on $(\p\Omega\times \p\Omega )\setminus \Delta$, instead of the domain in \eqref{eqn:def_S}, then the initial velocities $\dot a_\pm(0,\cdot,\cdot)$ might not cover all the directions. 
For example, let $\Omega$ be a disk and $x\in \p \Omega$ be a fixed point and also let $Q\equiv 0$. It is clear to see that the velocities $\dot a_\pm(0,x,y)$ are never normal to $\p \Omega$ for any $(x,y)\in (\p\Omega\times \p\Omega)\setminus \Delta$. 
Moreover, in this paper we rely on those (local) trajectories which are almost tangent to the boundary to identify the behavior of $Q$ near $\p\Omega$. These local trajectories are available only when $x$ and $y$ are far away from each other on $\p\Omega$, since in this case they can make $\dot a_\pm(0,x,y)$ almost tangent to $\p\Omega$. However, in practice this could be difficult to manipulate when the domain $\Omega$ is large.

\end{remark}

The inverse problem under study is the determination of the potential $Q$ in $\Omega$ from measurements $\mathcal S$. 
As mentioned above, the unique determination result of a background potential from the trajectories of vortex dipole was studied in \cite{LSSU}, under the assumption that the potential is almost flat in a suitable space. The measurements taken in \cite{LSSU} are regarding the Hamiltonian flow of the dipole center, i.e. $(a_++a_-)/2$, while in the current paper, we measure the trajectories of $a_+$ and $a_-$ separately without knowing the phase (velocity vector) information. In other words, the inverse problem under consideration only uses phaseless data, which is more applicable in practice.

The main strategy consists of two steps - local determination and global determination. We first show that all the derivatives of the potential $Q$ at a boundary point on $\p\Omega$, i.e. the boundary jet of $Q$, can be reconstructed from $\mathcal{S}$, see Theorem~\ref{boundary determination}. Then this established local result yields the global reconstruction of $Q$ in Theorem~\ref{real analytic case} due to the analytic assumption of $Q$.

To achieve this goal, we need an assumption on the convexity of the boundary w.r.t. the potential which is stated in the following definition.  
\begin{definition}\label{def:intro}
	 Let $p\in \p \Omega$, we say that the boundary $\p\Omega$ is {\it strictly convex at $p$ w.r.t. the background potential $Q$} if there exists a small open neighborhood $V_p$ of $p$, such that $a_+(s,p,q)\notin \overline\Omega$, $s\in (-\delta, \delta)\setminus\{0\}$ for some $\delta>0$, whenever $q\in V_p\setminus\Omega$, $q\neq p$, satisfying $\dot a_+(0,p,q)$ tangent to $\p\Omega$. 
\end{definition}
For example, given a bounded domain $\Omega$ with strictly convex boundary (w.r.t. the Euclidean metric) and $p\in\p\Omega$, we assume that $|\nabla Q(p)|$ is sufficiently small, then $\p\Omega$ is strictly convex at $p$ w.r.t. $Q$ (viewed as a small perturbation of trivial potential).
Note that even though we define the convexity by looking at the trajectory $a_+$, Definition~\ref{def:intro} could also be defined in terms of $a_-$ due to the symmetry of $a_+$ and $a_-$ in the ODEs \eqref{dipole ode}.

We have the following first theorem which states that the derivatives of the potential on the boundary $\p\Omega$ can be determined from $\mathcal S$ in a local manner.
\begin{theorem}[Boundary determination]\label{boundary determination}
Let $Q\in C^\infty(\mathbb R^2)$ be a background potential and $\Omega\subset \mathbb R^2$ be a bounded convex open domain with smooth boundary. Suppose that $\p \Omega$ is strictly convex w.r.t. $Q$ at $p\in \p\Omega$. There exists an open neighborhood $\widetilde\Omega$ of $\overline\Omega$.  
Suppose that $Q$ is known in $\mathbb R^2\setminus \widetilde \Omega$,  
then there exists an open neighborhood $W$ of $p$, $W\setminus \overline{\widetilde \Omega}\neq \emptyset$, so that the measurement $\mathcal S$ restricted in $W\setminus \Omega$ determines 
$$\p^\alpha Q(p) \qquad \hbox{ for all multi-indices $\alpha$ with }|\alpha|\geq 1.$$
\end{theorem}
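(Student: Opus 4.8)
The plan is to localize near $p$ and study the short excursions of $a_+$ that graze $\p\Omega$, using the partner vortex $a_-$ as a control on the initial velocity. Introduce boundary normal coordinates at $p$, with $\tau$ the unit tangent and $\nu$ the inward unit normal to $\p\Omega$, and write the initial velocity as
\begin{equation*}
\dot a_+(0,x,y)=\frac{1}{\pi}\frac{(x-y)^\perp}{|x-y|^2}+\nabla^\perp Q(x)=:I(x,y)+\nabla^\perp Q(x),
\end{equation*}
where $I(x,y)$ is explicitly known from the pair $(x,y)$. The hypotheses $W\setminus\overline{\widetilde\Omega}\neq\emptyset$ and ``$Q$ known outside $\widetilde\Omega$'' are used to place $a_-$ at a point $y\in W\setminus\overline{\widetilde\Omega}$: then $\nabla^\perp Q(a_-)$ is known all along the short excursion, and $\mathcal S$ moreover returns the exit position of $a_-$, so the only genuinely unknown ingredient driving the trajectory of $a_+$ is the jet of $Q$ near $p$. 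Choosing $y$ roughly in the outward normal direction from $x$ makes $I(x,y)$ nearly tangent, and tuning the placement of $y$ lets the normal component of $\dot a_+(0,x,y)$ be adjusted at will.

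First I would recover the $1$-jet. For $x\in\p\Omega\cap W$, vary $y$ until $\dot a_+(0,x,y)$ becomes tangent to $\p\Omega$; this threshold is detected from $\mathcal S$ as the transition of $\tau_+$ from $0$ (velocity pointing outward) to positive values (velocity pointing inward), and strict convexity of $\p\Omega$ w.r.t.\ $Q$ guarantees the grazing trajectory leaves $\overline\Omega$, so the nearby excursions are genuinely short. Approaching the threshold from the inward side, $\tau_+\to 0^+$ and $a_+(\tau_+,x,y)\to x$, while the difference quotient $\big(a_+(\tau_+,x,y)-x\big)/\tau_+$ converges to the (tangential) grazing velocity, thereby circumventing the phaseless nature of the data. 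Vanishing of the normal component at tangency yields $\p_\tau Q(x)$, and the recovered tangential component yields $\p_\nu Q(x)$, via the identities $\nabla^\perp Q\cdot\tau=\p_\nu Q$ and $\nabla^\perp Q\cdot\nu=-\p_\tau Q$ after subtracting the known $I(x,y)$. Letting $x$ range over $\p\Omega\cap W$ reconstructs $\nabla Q$ along the boundary, in particular $\p^\alpha Q(p)$ for $|\alpha|=1$, together with all its tangential derivatives.

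Next I would induct on the jet order. Assume the full jet of $Q$ along $\p\Omega\cap W$ is known up to order $k-1$ (equivalently, all $\p^\alpha Q$ with $|\alpha|\leq k-1$ at boundary points near $p$). Consider a one-parameter family of near-grazing excursions indexed by a small grazing parameter $\epsilon$, whose tangential extent is $O(\epsilon)$ and whose penetration depth into $\Omega$ is $O(\epsilon^2)$; expand the exit position and $\tau_+$ in $\epsilon$ by inserting the Taylor expansion of $Q$ at $p$ into the coupled ODE system and solving perturbatively, using that $a_-$ stays in the known region. By the induction hypothesis all coefficients are determined up to the first order at which $\p_\nu^k Q$ enters; matching that coefficient against the measured data isolates $\p_\nu^k Q$ at the grazing point, and tangential differentiation along $\p\Omega$ then recovers every $\p^\alpha Q$ with $|\alpha|=k$, in particular at $p$. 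Iterating over $k$ produces the entire boundary jet, which is the assertion.

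The hard part will be the two-body coupling through the singular Biot–Savart term: the trajectory of $a_+$ is driven by the moving partner $a_-$, which is only known at the endpoints and through $Q$ off $\widetilde\Omega$, so the coupled system must be solved self-consistently and the expansion carried to arbitrary order while keeping the contribution of the $Q$-jet separable. Compounding this, the phaseless data force every velocity and every higher coefficient to be extracted through limits of entry/exit positions and travel times, so one must prove that the near-grazing excursions are short, smooth, and uniformly controlled as functions of $\epsilon$; this is precisely where strict convexity of $\p\Omega$ w.r.t.\ $Q$ is indispensable. The central technical obstacle is therefore establishing, at each stage of the induction, that the first new coefficient in the $\epsilon$-expansion depends on $\p_\nu^k Q$ through an invertible (nondegenerate) relation, so that the reconstruction can actually be solved for the unknown derivative.
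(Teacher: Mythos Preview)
Your outline is sound and lands on the same architecture as the paper: localize at $p$ in boundary normal coordinates, park $a_-$ outside $\widetilde\Omega$ so that its contribution is known, use near-grazing trajectories of $a_+$ to probe the jet, recover $\nabla Q$ along $\p\Omega$ near $p$, and then induct on the normal order while filling in the mixed derivatives by tangential differentiation. Your device for the $1$-jet (detecting tangency as the $\tau_+=0$/$\tau_+>0$ threshold and reading off the grazing velocity from the difference quotient of exit data) is correct and is essentially equivalent to the paper's first step.

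Where the paper differs is in the engine that drives the induction. Instead of Taylor-expanding the coupled ODE directly, the paper invokes the Stefanov--Uhlmann integral identity: with $X_0$ the dipole flow in the trivial potential,
\[
X(\ell(t),\phi(t))-X_0(\ell(t),\phi(t))=\int_0^{\ell(t)}\frac{\p X_0}{\p\phi}\big(\ell(t)-s,X(s,\phi(t))\big)\,(V-V_0)(X(s,\phi(t)))\,ds,
\]
where $(V-V_0)=(\nabla^\perp Q(a_+),\,-\nabla^\perp Q(a_-))^T$. One then differentiates this known function of $t$ repeatedly at $t=0$; the $(2K+1)$-st derivative produces the leading term $K!\,(\ell'(0))^{K+1}\varepsilon^K\beta^K\,\p_2^K\nabla^\perp Q(\0)$ plus terms involving only lower and tangential jets. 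This packaging is exactly what buys you the clean separation you flag as ``the central technical obstacle'': the Biot--Savart coupling is absorbed into the known weight $\p X_0/\p\phi$, the unknown appears linearly through $\nabla^\perp Q(a_+)$, and the nondegeneracy at each step is the explicit nonzero scalar $K!\,(\ell'(0))^{K+1}\varepsilon^K\beta^K$. Your direct perturbative expansion would reach the same endpoint, but you would have to re-derive this separation and this coefficient by hand at every order; the integral identity does it once and for all.
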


\begin{figure}[ht]\label{dipole figure}
\begin{tikzpicture} [scale=.65]
\draw (2,0) circle [radius=4.5];
\draw (0,0) to [out=90, in=180] (3,2);
\draw (3,2) to [out=0, in=20] (2,-2);
\draw (2,-2) to [out=190, in=-5] (1,-2);
\draw (1,-2) to [out=170, in=270] (0,0);

\draw (-.9,0) to [out=90, in=180] (3.3,2.9 );
\draw (3.3,2.9 ) to [out=0, in=20] (2.8,-2.6);
\draw (2.8,-2.6) to [out=205, in=-8] (1,-2.8);
\draw (1,-2.8) to [out=170, in=270] (-.9,0);

\draw [->,>=latex, dotted, thick] (.41,-1.7) to [out=10, in=170] (2,-2);
\draw [->,>=latex, dotted, thick] (.25,-3) to [out= 20, in=180] (2.2,-3.3);

\filldraw  
(.41,-1.7) circle (1.5pt) node[below,font=\tiny] {$a_+$};
\filldraw
(.25,-3) circle (1.5pt) node[below,font=\tiny] {$a_-$};

\filldraw  
(2,-2) circle (1.5pt) node[below,font=\tiny] {$a_+(\tau_+)$};
\filldraw
(2.2,-3.3) circle (1.5pt) node[below,font=\tiny] {$a_-(\tau_+)$};

\draw (2,0) node[above,font=\large] {$ {\Large \Omega}$};

\draw (2,1.8) node[above] {$\widetilde\Omega$};

\draw (-.5,2.3) node[above,font=\large] {$U$};
\end{tikzpicture}
\caption{ 
	The figure illustrates the travel path of a dipole ($a_+$ with positive charge and $a_-$ with negative charge) with initial positions in $U\setminus\Omega$. The vortex $a_+$ reaches the boundary of domain $\Omega$ at the exit time $\tau_+$.
	 } 
\end{figure}
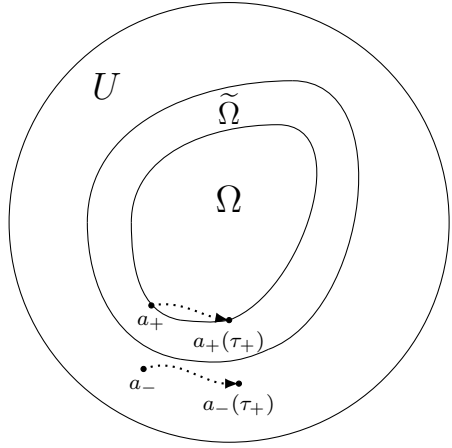

The existence of $\widetilde \Omega$ is analyzed in Section~\ref{sec:tildeOmega}. Roughly speaking, the thickness of the tubular neighborhood $\widetilde \Omega\setminus \Omega$ has an upper bound depending on $\sup_{x\in\p \Omega} |\nabla Q(x)|$ and the size of the small open neighborhood $V_p$ in the Definition \ref{def:intro}. In particular, we can choose the open neighborhood $W$ from Theorem \ref{boundary determination} to be the same as $V_p$.  
Notice that we do not know the information of $Q(x)$ for $x\in \widetilde{\Omega}\setminus\overline\Omega$, otherwise if $Q$ is given in $\widetilde{\Omega}\setminus\overline\Omega$, then one can determine all the derivatives of $Q$ on $\p \Omega$ by simply taking limits from outside of $\Omega$.

It is worth emphasizing that the determination of the boundary jet of $Q$ from $\mathcal S$ is purely local. More specifically, to determine $\p^\alpha Q(p)$ for all $|\alpha|\geq 1$, one only needs the information of the measurements
$$\{\mathcal S(x,y): (x,y)\in (W\cap \p\Omega)\times (W\setminus \Omega), x\neq y\}\cap ([0,\delta) \times \p\Omega\times (W\setminus\Omega))$$ 
for some small constant $0<\delta\ll 1$ and a small open neighborhood $W$ of $p\in\p\Omega$ so that $W\setminus \overline{\widetilde \Omega}\neq \emptyset$.  

Theorem \ref{boundary determination} immediately implies the global determination of real-analytic background potentials.  
Notice that a convex domain is path-connected.

\begin{theorem}[Global determination]\label{real analytic case}
Let $Q\in C^\infty(\mathbb R^2)$ be a background potential and let $\Omega\subset \mathbb R^2$ be a bounded convex open domain with smooth boundary. Suppose that $\p \Omega$ is strictly convex w.r.t. $Q$ at some point on $\p\Omega$. There exists an open neighborhood $\widetilde\Omega$ of $\overline\Omega$.  
Suppose, in addition, that $Q$ is known in $\mathbb R^2\setminus \widetilde \Omega$ and real-analytic in $\widetilde \Omega$.  
Then for any open neighborhood $U$ of $\overline{\widetilde\Omega}$, the measurement $\mathcal S$ in $U\setminus\Omega$ determines $Q$ in $\widetilde\Omega$.
\end{theorem}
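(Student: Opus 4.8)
The plan is to upgrade the pointwise boundary determination of Theorem~\ref{boundary determination} to a global statement by exploiting the rigidity of real-analytic functions, so that a single boundary jet propagates to all of $\widetilde\Omega$. Before doing so I would record a structural observation that clarifies the role of the hypothesis that $Q$ is known in $\mathbb R^2\setminus\widetilde\Omega$: the right-hand side of the system \eqref{dipole ode} depends on $Q$ only through $\nabla^\perp Q$, so the entire flow, and hence the measurement map $\mathcal S$, is invariant under the gauge transformation $Q\mapsto Q+c$ for any constant $c\in\mathbb R$. Consequently no measurement can ever recover the additive constant, which is precisely why Theorem~\ref{boundary determination} produces $\p^\alpha Q(p)$ only for $|\alpha|\geq 1$; the constant must be supplied externally, by the assumed knowledge of $Q$ outside $\widetilde\Omega$. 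Fixing a point $p\in\p\Omega$ at which $\p\Omega$ is strictly convex w.r.t. $Q$ (as guaranteed by hypothesis), and noting that $U$ is an open neighborhood of $\overline{\widetilde\Omega}$ while $p\in\p\Omega\subset\widetilde\Omega$, I can choose the neighborhood $W$ of $p$ furnished by Theorem~\ref{boundary determination}, with $W\setminus\overline{\widetilde\Omega}\neq\emptyset$, so that $W\subset U$; the data needed there is then contained in the prescribed measurement $\mathcal S$ on $U\setminus\Omega$. Applying Theorem~\ref{boundary determination} thus recovers $\p^\alpha Q(p)$ for every multi-index with $|\alpha|\geq 1$.

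Next I would exploit real-analyticity. Since $Q$ is real-analytic on $\widetilde\Omega$, in a neighborhood of $p$ it coincides with its convergent Taylor series
\begin{equation*}
Q(x)=Q(p)+\sum_{|\alpha|\geq 1}\frac{\p^\alpha Q(p)}{\alpha!}(x-p)^\alpha.
\end{equation*}
All coefficients except the constant term are now known, so the real-analytic function $g:=Q-Q(p)$ is completely determined in a neighborhood of $p$. Because $\overline\Omega$ is convex, hence connected, and $\widetilde\Omega$ is a tubular neighborhood of it, we may take $\widetilde\Omega$ to be a connected open set on which $g$ is real-analytic. By the \emph{identity theorem} for real-analytic functions, the set of points at which all derivatives of $g$ agree with the values determined above is both open, by local convergence of the Taylor series, and closed; since it is nonempty and $\widetilde\Omega$ is connected, it is all of $\widetilde\Omega$. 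Hence $g$ is determined throughout $\widetilde\Omega$.

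It then remains to pin down the constant $Q(p)$. I would pick any boundary point $z_0\in\p\widetilde\Omega$, where $Q(z_0)$ is known from the prescribed values of $Q$ on $\mathbb R^2\setminus\widetilde\Omega$. Since $Q\in C^\infty(\mathbb R^2)$, the already-determined function $g$ extends continuously up to $z_0$ with $g(z_0)=Q(z_0)-Q(p)$, and this boundary value is itself known as the limit of the determined interior values of $g$. Therefore $Q(p)=Q(z_0)-g(z_0)$ is recovered, and $Q=g+Q(p)$ is determined on all of $\widetilde\Omega$, which is the desired conclusion.

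Given Theorem~\ref{boundary determination}, the argument above is essentially soft, and the principal difficulty, namely extracting the full boundary jet from phaseless trajectory data, is entirely absorbed into that theorem. The only genuinely delicate points in the present step are the two I have emphasized: first, that the additive constant is fundamentally inaccessible to $\mathcal S$ by the gauge invariance $Q\mapsto Q+c$ of \eqref{dipole ode}, so that it can be recovered only from the exterior data via continuity at $\p\widetilde\Omega$; and second, that the passage from a single boundary jet to global information rests on the connectedness of $\widetilde\Omega$ together with the identity theorem, which is exactly what makes the real-analyticity hypothesis indispensable.
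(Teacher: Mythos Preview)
Your proof is correct and follows essentially the same route as the paper: invoke Theorem~\ref{boundary determination} at a strictly convex boundary point, propagate by analytic continuation using connectedness of $\widetilde\Omega$, and fix the additive constant using the known values of $Q$ on $\mathbb R^2\setminus\widetilde\Omega$. The only cosmetic difference is that the paper propagates $\nabla Q$ by analyticity and then recovers $Q$ via a line integral $Q(x)=Q(z)+\int_0^T\langle\nabla Q(\gamma),\dot\gamma\rangle\,dt$ from a point $z\in\p\widetilde\Omega$, whereas you propagate $g=Q-Q(p)$ directly by the identity theorem and match $g$ to the known boundary value by continuity; both implement the same idea and your explicit remark on the gauge invariance $Q\mapsto Q+c$ nicely explains why the exterior data is needed at all.
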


\subsection{Methodology}
In this paper, we are motivated by the techniques developed in the study of classical geometric inverse problem, namely the {\it boundary rigidity problem}, which consists of determining a Riemannian metric on a compact smooth manifold with boundary from the collection of distance (w.r.t. the metric) between pairs of boundary points. This problem arises naturally from the geophysical question of recovering the inner structure of the Earth from the travel time of seismic waves. We refer interested readers to the survey paper \cite{SUVZ19} and the references therein for recent developments of the boundary rigidity and related problems.

A key ingredient in the proof of Theorem \ref{boundary determination} is an integral identity first derived by Stefanov and Uhlmann \cite{SU3} in the study of the boundary rigidity problem for almost flat metrics. The integral identity works for general flows given by the solutions of governing ODE system. In our case, it connects the difference of measurement operators of two potentials $Q_1$ and $Q_2$ with some weighted integral of the difference of Hamiltonian vector fields of $Q_1$ and $Q_2$, see \eqref{F2int2}. In particular, we let $Q_2=0$, the trivial potential, and repeatedly differentiate the integral to recover all the derivatives of $Q_1$ at a boundary point. Notice that our approach of the boundary determination of the potential is constructional. The integral identity has also been used in the study of the boundary  and global determination questions in various inverse problems, e.g. the boundary rigidity problem \cite{CQUZ, SUV, SUV17,UW03, UYZ20, W}, the non-abelian Radon transform \cite{PSUZ, Zh17} and the lens rigidity problem for Yang-Mills potentials \cite{PUZ19,Zh18}.

The paper is structured as follows. In Section~\ref{sec:preliminary}, we discuss the construction of the domain $\widetilde{\Omega}$ and the preliminary setting for the problem under study. Section~\ref{sec:local reconstruction} is devoted to recovering the derivatives of the potential at a given boundary point by applying the Stefanov-Uhlmann identity. Then in Section~\ref{sec:global reconstruction}, the global determination of the potential follows by using the local result in Section~\ref{sec:local reconstruction} and the analyticity of the potential. Finally, additional useful lemmas are placed in the Appendix~\ref{sec:appendix}.

\section{Some preliminary analysis}\label{sec:preliminary}
In this section, we first show that the domain $\widetilde{\Omega}\supset\Omega$ exists so that one can choose the initial position of $a_-$ outside of $\widetilde{\Omega}$ to make the initial velocity of $a_+$ tangent to the boundary $\p\Omega$. Then based on the assumption that $Q$ is known in $\R^2\setminus\widetilde{\Omega}$, it implies $\nabla^\perp Q(a_-(0))$ is known. Next, with the help of the boundary normal coordinate, we show that the exit time of $a_+$ in $\Omega$ satisfies a useful property stated in Lemma~\ref{derivative of exit time}.
 
	\subsection{Construction of $\widetilde{\Omega}$}\label{sec:tildeOmega}
	Fixing some boundary point $p\in\p\Omega$ and since $\Omega$ is convex, by shifting or rotating it, one can suppose that the $x_1$-axis is tangent to $\p\Omega$ at $p$ and the $x_2$-axis is normal to $\p\Omega$ at $p$. Moreover, we require $\p_{2}:={\p\over \p x_2}$ to be inward pointing so that $\Omega\subset \{x_2>0\}$ (note that $\Omega$ is convex). From now on, we will use the notation $\0$ to denote the zero in coordinates and use $0$ to denote the scalar zero. Without loss of generality, we now suppose $p=\0$.
	
	We consider the following evolution equations of a dipole $\{a_+, a_-\}$ in the coordinates with initial position $\0$ and $\xi$, respectively: 
	\begin{align}\label{dipolepair 2}
	\left\{\begin{array}{l} 
	\dot{a}_+(s) = {1\over \pi} { (a_+ - a_-)^\perp \over |a_+ - a_-|^2 } + \nabla^\perp Q(a_+),\\ [.5em]
	\dot{a}_-(s) =  {1\over \pi}  { (a_+ - a_-)^\perp \over |a_+ - a_-|^2 } - \nabla^\perp Q(a_-),\\ [.5em]
	a_+(0) = \0,\\ [.5em]
	a_-(0) = \xi,\\ 
	\end{array}\right.
	\end{align}
	where $\xi=(\xi_1,\xi_2)\notin \Omega$ and $\xi\neq \0$.
	We are interested in the dependence of the initial velocity $\dot a_+(0)$ on the initial value $a_-(0)$  
	and also require $\dot a_+(0)$ to be almost tangential to $\p\Omega$ in order to address the boundary determination question in later section.

	To this end, we first write $a_\pm=(a^1_\pm, a^2_\pm)$, then the first equation of \eqref{dipolepair 2} can be rewritten in coordinates as
	\begin{align}\label{evolution local}
	\dot a^1_+(0) =\frac{-\xi_2}{\pi |\xi|^2}+\p_2 Q(\0), \quad \dot a^2_+(0) =\frac{\xi_1}{\pi |\xi|^2}-\p_1 Q(\0).
	\end{align}
	Here $|\xi|$ is the Euclidean distance between $p=\0$ and $\xi$. 
	
	Next we assume that $\dot a_+^2(0)=0$, i.e. $\dot a_+(0)\neq \0$ is tangent to $\p\Omega$, then \eqref{evolution local} gives 
	$$\frac{\xi_1}{\pi |\xi|^2}=\pi \p_1 Q(\0)  .$$
	We split the discussion into the following three cases:
	\begin{enumerate}
		\item When $\p_1 Q(\0)=0$, we have ${\xi_1\over \pi|\xi|^2}=0$ which implies $\xi_1=0$ for any $\xi_2<0$. Then
		 we can choose almost any $\xi_2<0$ such that 
		$$\dot a_+^2 (0)=0, \quad \dot a_+^1(0)\neq 0 \hbox{ and $\xi=(0,\xi_2)\notin \Omega$}.$$ 
		\item When $\p_1 Q(\0)> 0$, we have $\xi^2_2=\xi_1(\frac{1}{\pi \p_1 Q(\0)}-\xi_1)$. Then we can choose almost any $0<\xi_1<\frac{1}{\pi \p_1 Q(\0)}$ such that $$\dot a_+^2 (0)=0, \quad \hbox{$\dot a_+^1(0)\neq 0$} \hbox{ and }\hbox{$\xi=(\xi_1,\xi_2)\notin \Omega$ (with $\xi_2<0$)}. 
		$$ 

		\item The case $\p_1 Q(\0)<0$ leads to a similar conclusion as case (2).
	\end{enumerate}
In particular, in cases $(2)$-$(3)$ (that is, $\p_1 Q(\0)\neq 0$), if $0<|\xi_1|<\frac{1}{\pi |\p_1 Q(\0)|}$, then $\frac{-1}{2\pi |\p_1 Q(\0)|}\leq \xi_2<0$. Moreover, if we have the upper bound $\sup_{x\in\p\Omega}|\nabla Q(x)|\leq M$ for some $M>0$, then one can always make $\frac{-1}{2\pi M}<\xi_2<\frac{-1}{4\pi M}<0$, 
independent of $p\in \p\Omega$. We note that for case (1), this bound for $\xi_2$ is straightforward.
	This implies the existence of an open neighborhood $\widetilde \Omega$ of $\Omega$ which is defined as follows: 
	$$\widetilde \Omega:=\{x\in\mathbb R^2\; |\; \mbox{dist}\,(x, \Omega)< \min\; \{ (4\pi M)^{-1}, \sigma\}\}$$
    with $0<\sigma\ll 1$ a fixed number so that $V_p\setminus \overline{\widetilde\Omega}\neq \emptyset$. Here $V_p$ is the open neighborhood appearing in Definition \ref{def:intro} of the convexity at $p=\0\in \p\Omega$ w.r.t. $Q$. 
	In other words, $\widetilde \Omega\setminus \Omega$ is a tubular neighborhood around $\Omega$. 
	Therefore for all three cases, we can always find a vector $\xi\notin \overline{\widetilde \Omega}$, so that $\dot a_+(0)\neq \0$ is tangent to $\p\Omega$. By continuity, the existence of $\xi\notin \overline{\widetilde \Omega}$ is also valid in order to have almost tangential $\dot a_+(0)$.

\subsection{Behavior near a convex boundary point}\label{sec:boundary jet}

Suppose that the dipole $\{a_+, a_-\}$ with initial position $\0$ and $\xi(t)$, respectively, satisfies the following problem:
\begin{align}\label{dipolepair}
\left\{\begin{array}{l} 
\dot{a}_+(s;t) = {1\over \pi} { (a_+ - a_-)^\perp \over |a_+ - a_-|^2 } + \nabla^\perp Q(a_+),\\ [1em]
\dot{a}_-(s;t) =  {1\over \pi}  { (a_+ - a_-)^\perp \over |a_+ - a_-|^2 } - \nabla^\perp Q(a_-),\\ [1em]
a_+(0;t) = \0,\\ [1em]
a_-(0;t) = \xi(t).\\ 
\end{array}\right.
\end{align}

Section~\ref{sec:tildeOmega} suggests that there exists an open bounded domain $\widetilde{\Omega}$ containing $\overline\Omega$ so that the following is valid: There exists nontrivial initial data $\xi_0\notin  \overline{\widetilde \Omega}$ of $a_-$ such that $\dot a_+(0;0)$ is tangent to $\p\Omega$.  
Moreover, by continuity, there exists a smooth curve 
$$
\xi (\cdot):[0,\delta)\to \R^2\setminus \overline{\widetilde\Omega} 
$$ for small $0<\delta\ll1$ so that $\dot a_+(0;t)$ is almost tangent to $\p\Omega$ and $\dot a_+^2(0;t)>0$ (i.e. inward pointing) for $t\in (0,\delta)$.

The convexity of $\Omega$ w.r.t. $Q$ at $p=\0$ yields that for each $t\in (0,\delta)$, $a_+(\cdot;t)$ exits $\Omega$ at some point 
$c(t)\in\p\Omega$ which is close to $p$ and satisfies $$c(t)\to p  \qquad \hbox{ as }t\to 0.$$ Moreover, the exit time $\ell(t)$, i.e. $a_+(\ell(t);t)=c(t),$ is smooth and satisfies $$\ell(t)\to 0  \qquad\hbox{ as } t\to 0,$$ 
while $a_-(s;t)\in \R^2\setminus \widetilde\Omega$  
for $s\in [0,\ell(t)]$. Note that for the simplicity of the notation, here we denote $\ell(t):=\tau_+(\0,\xi(t))$ where $\tau_+(\0,\xi)$ is defined in \eqref{eqn:def_time}.
It is worth pointing out that $c(t)$ and $\ell(t)$ can be determined from the measurements $\mathcal S(\0,\xi(t))$.

\subsubsection{Boundary normal coordinates.}

To connect the behavior of the dipole with the measurement $\mathcal S$ near the boundary, it is helpful to introduce the {\it boundary normal coordinates}. 

Consider a neighborhood $W$ of a boundary point $p\in\p\Omega$, equipped with the boundary normal coordinates $\{z^1, z^2\}$, so that 
$$\p\Omega\cap W=\{z^2=0\},\qquad W\cap \Omega\subset\{z^2>0\},$$ 
and $z^1=c$ ($c$ is a constant) are straight lines normal to the boundary $\p \Omega$. Then the boundary near $p$ is straightened in such coordinates. Moreover, in the boundary normal coordinates, the Euclidean metric takes the form 
$$e=f(z^1,z^2) (dz^1)^2+(dz^2)^2.$$  
Here $f$ is a positive (local) function, $f(\0)=1$.

Let's also take $p=\0$ in this coordinates. Notice that $\Omega$ is a convex domain, one can always make the neighborhood $W$ large enough outside $\Omega$ so that there exists $\xi\in W\setminus \overline{\widetilde \Omega}\neq \emptyset$ with $\dot a_+(0)$ tangent to $\p\Omega$ at $\0$.
Suppose that $\p\Omega$ is strictly convex w.r.t. $Q$ at $p=\0$. In the boundary normal coordinates, the definition of the strict convexity of $\p\Omega$ w.r.t. $Q$ at $\0$ can be interpreted as follows: Let $\{a_+,a_-\}$ be solutions of \eqref{dipolepair 2} with $\dot a_+(0)$ tangent to $\p\Omega$, then in the boundary normal coordinates, one has
$$\dot a^2_+(0)=0,\quad \ddot a^2_+(0)<0.$$

In the lemma below, we will show the exit time $\ell(t)$ is increasing locally at $t=0$ with the help of the boundary normal coordinate.
\begin{lemma}\label{derivative of exit time}
There exists $\xi (\cdot):[0,\delta)\to W\setminus \overline{\widetilde\Omega}$ with $\xi(0)=\xi_0$, such that $\ell'(0)>0$.
\end{lemma}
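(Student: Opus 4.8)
The plan is to realize the exit time $\ell(t)$ as an implicitly defined function and apply the implicit function theorem, but with a modification that handles the degeneracy coming from the tangency $\dot a_+^2(0;0)=0$. I would work throughout in the boundary normal coordinates $\{z^1,z^2\}$ introduced above, in which $\p\Omega\cap W=\{z^2=0\}$ and $W\cap\Omega\subset\{z^2>0\}$, and set $g(s,t):=a_+^2(s;t)$, the normal component of the $a_+$ trajectory. The exit condition $a_+(\ell(t);t)=c(t)\in\p\Omega$ then reads $g(\ell(t),t)=0$. The key structural observation is that $g(0,t)=0$ for all $t$, because $a_+(0;t)=\0$ by the initial condition in \eqref{dipolepair}. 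Thus $s=0$ is always a trivial zero of $g(\cdot,t)$, and $\ell(t)$ is the nontrivial zero corresponding to the actual exit. Differentiating $g(\ell(t),t)=0$ directly fails at $t=0$: since $\ell(0)=0$ and $\dot a_+^2(0;0)=0$ by tangency, the factor $\partial_s g(0,0)=\dot a_+^2(0;0)$ in the denominator of the implicit-function formula vanishes.

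To remove this degeneracy I would factor out the trivial zero. By Hadamard's lemma, since $g$ is smooth and $g(0,t)=0$, one can write $g(s,t)=s\,h(s,t)$ with $h(s,t)=\int_0^1 \partial_s g(\sigma s,t)\,d\sigma$ smooth, and the exit time is characterized by $h(\ell(t),t)=0$ for $\ell(t)\neq 0$, together with $h(0,0)=\partial_s g(0,0)=\dot a_+^2(0;0)=0$, so that $(s,t)=(0,0)$ lies on this zero set. Differentiating $g=s\,h$ twice in $s$ gives $\partial_s h(0,0)=\tfrac12\partial_s^2 g(0,0)=\tfrac12\ddot a_+^2(0;0)$, which is strictly negative by the strict convexity of $\p\Omega$ w.r.t. $Q$ at $\0$. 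Hence $\partial_s h(0,0)\neq 0$, the implicit function theorem applies to $h$ at $(0,0)$, and its unique smooth solution branch is exactly the exit-time branch $\ell(t)$ with $\ell(0)=0$, yielding $\ell'(0)=-\partial_t h(0,0)/\partial_s h(0,0)$.

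It then remains to analyze the numerator $\partial_t h(0,0)$. Since $h(0,t)=\partial_s g(0,t)=\dot a_+^2(0;t)$, one has $\partial_t h(0,0)=\frac{d}{dt}\dot a_+^2(0;t)\big|_{t=0}$, the rate at which the normal component of the initial velocity opens up as $\xi(t)$ leaves the tangential configuration. Here I would use the freedom in choosing the curve $\xi(\cdot)$ granted by the existence statement: by Section~\ref{sec:tildeOmega} the dependence of $\dot a_+(0)$ on $\xi$ is explicit (cf.\ \eqref{evolution local}, which in the normal coordinates differs only by the metric factor $f$ with $f(\0)=1$) and non-degenerate near $\xi_0$, so one may select $\xi(\cdot)$ with $\dot a_+^2(0;t)>0$ for $t\in(0,\delta)$ growing to first order, i.e.\ $\frac{d}{dt}\dot a_+^2(0;t)\big|_{t=0}>0$ (for instance taking the normal velocity component itself, up to a positive factor, as the parameter). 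With $\partial_t h(0,0)>0$ and $\partial_s h(0,0)<0$, the formula gives $\ell'(0)=-\partial_t h(0,0)/\partial_s h(0,0)>0$, as claimed.

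The main obstacle is precisely this degeneracy at the tangent configuration, where the usual implicit function theorem is inapplicable; the whole argument hinges on the factorization $g=s\,h$ removing the spurious zero $s=0$ and on the strict-convexity inequality $\ddot a_+^2(0;0)<0$ supplying the nonvanishing second-order term that makes $\partial_s h(0,0)\neq 0$.
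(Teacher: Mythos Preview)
Your argument is correct and takes a genuinely different route from the paper's. The paper argues by contradiction: assuming $\ell'(0)=0$ forces $\ell(t)=O(t^2)$, and then the expansion $0=a_+^2(\ell(t);t)=\varepsilon\beta t\,\ell(t)+O(\ell(t)^2)$ together with $\ell(t)>0$ for $t>0$ gives $t=O(\ell(t))$, contradicting $\ell(t)=O(t^2)$ for small $t$. Positivity of $\ell'(0)$ then follows from $\ell(t)>0$.

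Your Hadamard factorization $g(s,t)=s\,h(s,t)$ is a cleaner way to handle the same degeneracy: instead of a contradiction, you isolate the nontrivial branch directly and apply the implicit function theorem to $h$, where the strict convexity condition $\ddot a_+^2(0;0)<0$ supplies $\partial_s h(0,0)\neq 0$. This buys you two things the paper's argument does not: an explicit formula
\[
\ell'(0)=-\frac{\partial_t\dot a_+^2(0;0)}{\tfrac12\,\ddot a_+^2(0;0)}=\frac{2\varepsilon\beta}{-\ddot a_+^2(0;0)},
\]
and smoothness of $\ell$ near $t=0$ as a byproduct (the paper simply asserts smoothness). Both proofs rely on the same freedom in choosing $\xi(\cdot)$ so that the normal component of $\dot a_+(0;t)$ opens linearly in $t$; you make this step slightly more explicit by noting that the map $\xi\mapsto\dot a_+(0)$ is nondegenerate near $\xi_0$. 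The one point worth stating more carefully is the identification of the implicit-function branch $L(t)$ with the exit time $\ell(t)$: since $\ell(t)>0$ is a zero of $h(\cdot,t)$ and $\ell(t)\to 0$, uniqueness in the implicit function theorem forces $\ell(t)=L(t)$ for small $t$, which you use implicitly.
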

\begin{proof}
We now denote $a_+(s;t)=:(a_+^1(s;t), a_+^2(s;t))$ in the boundary normal coordinates.
By continuity, there exists $\xi(\cdot):[0,\delta)\to W\setminus \overline{\widetilde\Omega}$ with $\xi(0)=\xi_0$, a constant $\varepsilon>0$, such that
\begin{equation}\label{choice of xi(t)}
\dot{a}_+(0;t)=(\dot{a}_+^1(0;t),\,\dot{a}_+^2(0;t))=\varepsilon (\alpha(t),\, \beta t), \quad \beta>0, 
\end{equation}
where $\alpha(t)^2+\beta^2t^2 =1$. In particular, if $t=0$, then 
$
|\alpha(0)|=1.
$
By the strict convexity w.r.t. $Q$ at $p$, it is clear that $\ell(0)=0$. 

To show $\ell'(0)>0$, we first show $\ell'(0)\neq 0$ by applying the contradiction argument. Suppose that $\ell'(0)= 0$. We can then write $\ell(t)$ asymptotically for small $t$ as 
		\begin{align}\label{ell expansion}
		\ell(t) =\ell(0)+\ell'(0)t+O(t^2)= O(t^2).
		\end{align}
		When $s$ is sufficiently small, we have the asymptotic expansion of $a_+$ as follows:
		$$
		a_+(s;t)= a_+(0;t) +\dot{a}_+(0;t) s +O(s^2).
		$$
		We note that the second component $a_+^2(\cdot;t)$ always vanishes at $s=\ell(t)$ in the boundary normal coordinates (the boundary is straightened near $p$), namely,
		$$
		0\equiv a_+^2(\ell(t);t)=\dot{a}_+^2(0;t) \ell(t) +O(\ell(t)^2),
		$$
		and, moreover, we then have
		\begin{align}\label{ell expansion 2}
		0= \varepsilon\beta t\,\ell(t) +O(\ell(t)^2) 
		\end{align}
		for $t\in[0,\delta)$ for some small $\delta>0$.
		Since $\beta>0$ and \eqref{ell expansion}, it implies that \eqref{ell expansion 2} can not hold for all $t$ in $[0,\delta)$. This leads to a contradiction and thus $\ell'(0)\neq 0$.	Finally since $\ell(t)>0$ for $t>0$, we obtain that $\ell'(0)>0$.	
\end{proof}

\begin{remark} 
Lemma \ref{derivative of exit time} shows the existence of $\xi(t)$ with $\ell'(0)\neq 0$. As one will see in Section \ref{sec:1 derivative}, this is sufficient for determining $\nabla Q(\0)$ from $\mathcal S$. Then using the ODEs \eqref{dipolepair}, we can determine the relation between $\xi(t)$ and $\dot a_+(0;t)$. In particular, we are able to choose $\xi(t)$ so that $\ell'(0)\neq 0$, and $\dot a_+(0;t)$ exactly has the form as in \eqref{choice of xi(t)}.
\end{remark}

\section{Determination of the boundary jet}\label{sec:local reconstruction} 
In this section, we will show the local reconstruction of $Q$ from the measurement $\mathcal S$. 
This section consists of two main parts. In Section~\ref{sec:SU identity}, we first introduce the Stefanov-Uhlmann identity. This identity connects the data $\mathcal{S}$ to the discrepancy in the two vortex dynamic in a given potential (we will take the given one to be trivial potential $Q_0$) and the unknown potential $Q$, respectively. Then we will apply this identity to recover such potential $Q$ in Section~\ref{sec:jet}.

\subsection{Stefanov-Uhlmann identity}\label{sec:SU identity}
Following the setting at the beginning of Section~\ref{sec:boundary jet}, now we denote the initial boundary data by 
$$
\phi(t):= (\0,\, \xi_t),\qquad \xi_t:=\xi(t) \quad \hbox{for small }t.
$$
We denote the trajectory of the dipole in the background $Q$ by 
$$
X(s,\phi(t)) = (a_+(s;t),\,a_-(s;t)).
$$
Then $X(0,\phi(t))=\phi(t)$.

When $Q\equiv 0$ (we denote the trivial potential by $Q_0$), we particularly use the notation 
$$X_0(s,\phi(t))=(a^0_+(s;t),\,a^0_-(s;t))$$
to denote the trajectory of the dipole in this trivial background. Specifically, from the ODEs \eqref{dipolepair}, one can derive the exact expression of $a^0_\pm(s;t)$ as follows:
$$
a^0_+(s;t)= {-\xi_t^\perp \over \pi|\xi_t|^2} s,\qquad a^0_-(s;t)= a^0_+(s;t)+\xi_t.
$$
Note that since $X_0(s,\phi(t))$ is the path of the dipole in the trivial background, we actually know its trajectory at any time $s$. Then $X_0(\ell(t),\phi(t))$ is indeed a known data.

The Stefanov-Uhlmann integral identity, derived in \cite{SU3}, is adjusted in our setting and is read as follows:
\begin{proposition}[Stefanov-Uhlmann Identity]\label{Prop:SUid} We have
\begin{align} \label{F2int2}
&X(\ell(t),\phi(t))-X_0(\ell(t),\phi(t)) \notag \\
& =\int^{\ell(t)}_0 {\partial X_0 \over \partial \phi(t)} (\ell(t)-s, X(s, \phi(t)))(V - V_0)(X(s,\phi(t)))\,ds,
\end{align}	 
where the velocity matrix in the background potential $Q$ is defined by
\begin{equation*}\label{definition V}
\begin{split}
V(X(s,\phi(t))) &:= (\dot a_+(s;t), \; \dot a_-(s;t))^T,  
\end{split}
\end{equation*}
and when $Q\equiv 0$, we denote
\begin{equation*}\label{definition V_0}
\begin{split}
V_0(X(s,\phi(t))) &:= \bigg(\frac{(a_+(s;t)-a_-(s;t))^\perp}{\pi |a_+(s;t)-a_-(s;t)|^2},\; \frac{(a_+(s;t)-a_-(s;t))^\perp}{\pi |a_+(s;t)-a_-(s;t)|^2}\bigg)^T. 
\end{split}
\end{equation*} 
Here the superscript $T$ denotes the transpose of a vector. In particular
$$(V-V_0)(X(s,\phi(t)))=(\nabla^\perp Q(a_+(s;t)),\; -\nabla^\perp Q(a_-(s;t)))^T.$$
\end{proposition}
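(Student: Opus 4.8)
The plan is to derive \eqref{F2int2} by interpolating between the two flows and applying the fundamental theorem of calculus, following the Stefanov--Uhlmann scheme. Fix $t$ and abbreviate $\phi = \phi(t)$ and $\ell = \ell(t)$. First I would introduce the curve
\begin{equation*}
g(s) := X_0\big(\ell - s,\, X(s,\phi)\big), \qquad s\in[0,\ell],
\end{equation*}
which takes the true position $X(s,\phi)$ and evolves it by the free dynamics $X_0$ for the remaining time $\ell - s$. At the endpoints, $g(0) = X_0(\ell, X(0,\phi)) = X_0(\ell,\phi)$, while $g(\ell) = X_0(0, X(\ell,\phi)) = X(\ell,\phi)$ since $X_0(0,\cdot)$ is the identity map. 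Hence the left-hand side of \eqref{F2int2} is $g(\ell) - g(0) = \int_0^\ell g'(s)\,ds$, and the identity reduces to computing $g'(s)$.

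By the chain rule,
\begin{equation*}
g'(s) = -\,\partial_1 X_0\big(\ell - s, X(s,\phi)\big) + \frac{\partial X_0}{\partial\phi}\big(\ell - s, X(s,\phi)\big)\,\dot X(s,\phi),
\end{equation*}
where $\partial_1$ denotes the partial derivative in the first (time) argument. The decisive step is to rewrite the first term. The free flow obeys both the flow equation $\partial_1 X_0(u,y) = V_0(X_0(u,y))$ and, by differentiating the semigroup relation $X_0(u+\varepsilon, y) = X_0(u, X_0(\varepsilon,y))$ at $\varepsilon = 0$, the identity $\partial_1 X_0(u,y) = \frac{\partial X_0}{\partial\phi}(u,y)\,V_0(y)$. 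Applying the latter with $u = \ell - s$ and $y = X(s,\phi)$, and inserting $\dot X(s,\phi) = V(X(s,\phi))$ from the flow equation for the true dynamics, the two terms collapse to
\begin{equation*}
g'(s) = \frac{\partial X_0}{\partial\phi}\big(\ell - s, X(s,\phi)\big)\,(V - V_0)\big(X(s,\phi)\big).
\end{equation*}
Integrating over $[0,\ell]$ gives \eqref{F2int2} verbatim, and the concluding formula of the statement follows since $V - V_0$ reads off from \eqref{dipolepair} as the pure potential contribution $(\nabla^\perp Q(a_+),\,-\nabla^\perp Q(a_-))^T$.

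The computation is short, so the care lies in the justifications. I would first invoke standard ODE theory for smooth dependence of $X_0$ on both its time variable and its initial data, which legitimizes the chain rule and the Jacobian $\frac{\partial X_0}{\partial\phi}$; this needs $V_0$ to remain smooth along the relevant trajectory, i.e. the vortices must not collide, which holds here because $a_-(s;t)$ stays in $\R^2\setminus\widetilde\Omega$ for $s\in[0,\ell]$ as arranged in Section~\ref{sec:boundary jet}. The conceptual crux --- and what I expect to be the main obstacle to present cleanly --- is the differentiated semigroup identity together with the bookkeeping it forces: in the integrand $V_0$ must be understood as the free velocity \emph{field} evaluated at the true point $X(s,\phi)$, not as the velocity of the free trajectory $X_0$. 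Keeping these two roles of $V_0$ unambiguous is exactly what makes the difference $V - V_0$ equal to the clean potential term, which is the feature that the later reconstruction argument exploits.
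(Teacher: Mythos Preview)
Your argument is correct and is the standard derivation of the Stefanov--Uhlmann identity. The paper itself does not supply a proof of this proposition; it simply imports the identity from \cite{SU3} and states it in the present setting, so there is nothing to compare against beyond noting that your interpolation $g(s)=X_0(\ell-s,X(s,\phi))$ together with the differentiated semigroup relation $\partial_1 X_0(u,y)=\frac{\partial X_0}{\partial\phi}(u,y)\,V_0(y)$ is exactly the mechanism behind the cited result.
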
 
 
Since $X(\ell(t),\phi(t))$ for $t$ small is given by the measurement $\mathcal S$, we have $X(\ell(t),\phi(t))-X_0(\ell(t),\phi(t))$ is known, which implies that the right-hand side of \eqref{F2int2} is also known for sufficiently small $t$. From now on, we denote the integral in \eqref{F2int2} by
\begin{align} \label{F2int1}
R(t):=\int^{\ell(t)}_0 {\partial X_0 \over \partial \phi(t)} (\ell(t)-s, X(s, \phi(t)))(V - V_0)(X(s,\phi(t)))\,ds.
\end{align}

In the remaining of this section, we will discuss how to obtain the information of $Q$ near $p=\0$ from this known data $R(t)$.
The key strategy is taking the derivative of $R(t)$ w.r.t. $t$ multiple times in order to extract useful information of the derivatives of $Q$. Let's denote the $k$-th derivative of $R(t)$ by $R^{(k)}(t)$, that is,
$$R^{(k)}(t):={d^k\over dt^k} R(t).$$

\subsection{Reconstruction of boundary jet}\label{sec:jet}
We will recover all the derivatives of the potential $Q$ at boundary point $\0$ by the induction argument. In particular, we will discuss the derivative of $R(t)$ up to fifth order in details, which will provide crucial observations regarding the reconstruction of the higher order term of $Q$.

\subsubsection{\bf $1^{st}$ derivative w.r.t. $t$}\label{sec:1 derivative}
We differentiate \eqref{F2int1} w.r.t. $t$ at $t=0$. Then we obtain
\begin{align*} 
\lim\limits_{t\rightarrow 0}R^{(1)}(t)&=\lim\limits_{t\rightarrow 0}\Big[ \ell'(t) {\partial X_0 \over \partial \phi } (0, X(\ell(t), \phi(t)))(V - V_0)(X(\ell(t),\phi(t)))  \\
&\quad + \int^{\ell(t)}_0 {\p\over \p t}\LC  {\partial X_0 \over \partial \phi} (\ell(t)-s, X(s, \phi(t)))(V - V_0)(X(s,\phi(t)))\RC\,ds\Big]\\
&= \ell'(0){\partial X_0 \over \partial \phi} (0,\phi(0)) (V - V_0)(\phi(0)) \\
&= \ell'(0)\left[ \begin{array}{c}
\nabla^\perp Q(a_+(\ell(0);0))\\
-\nabla^\perp Q(a_-(\ell(0);0))\\
\end{array}\right]_{4\times 1},
\end{align*}
where we used that
$$
\ell(0)=0,\quad {\partial X_0 \over \partial \phi} (0,\phi(0)) = Id_{4\times 4}.
$$
Here $Id_{4\times 4}$ is the $4\times 4$ identity matrix.
Note that $a_+(\ell(0);0)=a_+(0;0)=\0$ and from the calculation of $\lim\limits_{t\rightarrow 0}R^{(1)}(t)$ above, we can determine the value 
$$
 \ell'(0)\left[ \begin{array}{c}
\nabla^\perp Q(\0)\\
-\nabla^\perp Q(\xi_0)\\
\end{array}\right]_{4\times 1}.
$$
Since $\ell'(0)>0$ is known from the measurement $\mathcal S$, it gives the recovery of $\nabla Q(\0)$ by considering only the first component in the above matrix. Note that all the derivatives of $Q$ at $\xi_0$ is known due to the assumption that $Q$ is known in $U\setminus \widetilde\Omega$.

\subsubsection{\bf $2^{nd}$ derivative w.r.t. $t$}\label{sec:2 derivative}
In order to determine the second order derivative of $Q$ at $\0$, we further calculate the derivative of $R^{(1)}(t)$ w.r.t. $t$ as below:
\begin{align*} 
R^{(2)}(t)& ={d\over dt}\LC\ell'(t) {\partial X_0 \over \partial \phi } (0, X(\ell(t), \phi(t)))(V - V_0)(X(\ell(t),\phi(t)))\RC\\
&\quad+\ell'(t){\p \over \p t }\LC  {\partial X_0 \over \partial \phi} (\ell(t)-s, X(s, \phi(t)))(V - V_0)(X(s,\phi(t)))\RC\Big|_{s=\ell(t)} \\
& \quad+  \int^{\ell(t)}_0 {\p^2\over \p t^2}\LC  {\partial X_0 \over \partial \phi} (\ell(t)-s, X(s, \phi(t)))(V - V_0)(X(s,\phi(t)))\RC\,ds \\
&=:R^{(2)}_1(t)+R^{(2)}_2(t)+R^{(2)}_{\text{int}}(t).
\end{align*}
It is clear that $R^{(2)}_{\text{int}}(0)=\0$. 	
Notice that if $\p\Omega$ is strictly convex w.r.t. $Q$ at $\0$, then it is strictly convex w.r.t. $Q$ for boundary points sufficiently close to $\0$. By applying the argument in Section \ref{sec:1 derivative} to the nearby boundary points, we can determine $\nabla Q(x)$ for $x\in\p\Omega$ sufficiently close to $p=\0$. Since the position information $X(\ell(t),\phi(t)$) is known from $\mathcal S$ and, in particular, $a_+(\ell(t);t)\in\p\Omega$ are close to $\0$ for small $t$, it implies that the value of $(V - V_0)(X(\ell(t),\phi(t)))$ is then known for small $t$. This gives that the term $R^{(2)}_1(t)$ is also known for small $t$ by noting that ${\p X_0\over \p\phi}$ is known. 

Now for $R^{(2)}_2$, we have
\begin{align*} 
R^{(2)}_2(t)
&= \ell'(t) {\p \over \p t}\LC{\partial X_0 \over \partial \phi} (\ell(t)-s, X(s, \phi(t)))\RC\Big|_{s=\ell(t)} (V - V_0)(X(\ell(t),\phi(t)))\\
&\quad+ \ell'(t)  {\partial X_0 \over \partial \phi} (0, X(\ell(t), \phi(t)))[\p_t(V - V_0)(X(s,\phi(t)))]\Big|_{s=\ell(t)}.
\end{align*}
Similarly, ${\p \over \p t}{\partial X_0 \over \partial \phi} (\ell(t)-s, X(s, \phi(t))|_{s=\ell(t)}$ and $(V - V_0)(X(\ell(t),\phi(t)))$ are known for $t$ small, as a result, we only need to focus on the second term in $R^{(2)}_2(t)$, that is,
\begin{align}\label{to be recovered term in 2 derivative}
    \mathcal{K}_2(t)&:=( \mathcal{K}_{2,1}(t),\, \mathcal{K}_{2,2}(t))^T  \notag\\
    &:=\ell'(t)  {\partial X_0 \over \partial \phi} (0, X(\ell(t), \phi(t)))[\p_t(V - V_0)(X(s,\phi(t)))]\Big|_{s=\ell(t)},
\end{align}
whose first term $\mathcal{K}_{2,1}(t)$ turns out to be zero as $t\rightarrow 0$ by Lemma~\ref{lemma limit V} in the Appendix. Its second term $\mathcal{K}_{2,2}(t)$ contains known data $\p_t(\nabla^\perp Q)(a_-(s;t))|_{s=\ell(t)}$ as $t\rightarrow 0$.

In conclusion, we have seen that taking the second derivative of $R(t)$ does not lead to additional information about $Q$, which motivates us to consider the next derivative, that is, $R^{(3)}(t)$.

\subsubsection{\bf $3^{rd}$ derivative w.r.t. $t$}
We recall that in $R^{(2)}(t)$, every term is known now for small $t$, except $R^{(2)}_{\text{int}}(t)$ and $\mathcal{K}_2(t)$ defined in \eqref{to be recovered term in 2 derivative} which are only known as $t\rightarrow 0$. Then after straightforward computations, we have 
\begin{align*} 
R^{(3)}(t)&=\hbox{known terms}+ {d\over d t}\LC\ell'(t) {\partial X_0 \over \partial \phi } (0, X(\ell(t), \phi(t)))  \p_t (V - V_0)(X(s,\phi(t)))  \Big|_{s=\ell(t)} \RC\\
&\quad+\ell'(t){\p^2 \over \p t^2}\LC  {\partial X_0 \over \partial \phi} (\ell(t)-s, X(s, \phi(t)))(V - V_0)(X(s,\phi(t))) \RC\Big|_{s=\ell(t)}  \\
& \quad+  \int^{\ell(t)}_0 {\p^3\over \p t^3}\LC  {\partial X_0 \over \partial \phi} (\ell(t)-s, X(s, \phi(t)))(V - V_0)(X(s,\phi(t)))\RC\,ds \\
&=:\hbox{known terms}+R^{(3)}_1(t)+R^{(3)}_2(t)+R^{(3)}_{\text{int}}(t).
\end{align*}
Note that these known terms have been known for small $t$ and the remaining terms are derived from $R^{(2)}(t)$ through
$$R^{(3)}_1(t):={d\over dt}\mathcal{K}_2(t)\qquad \hbox{ and } R^{(3)}_2(t)+R^{(3)}_{\text{int}}(t) := {d\over dt} R^{(2)}_{\text{int}}(t).$$
Again, it is clear to see that $R^{(3)}_{\text{int}}(0)=\0$. Moreover, we can also derive that $\lim\limits_{t\rightarrow 0}R^{(3)}_2(t)$ is indeed known by Lemma~\ref{lemma limit V}.

For the term $R^{(3)}_1(t)$, Lemma~\ref{lemma limit V} is applied to get that
\begin{align*}
 \lim\limits_{t\rightarrow 0} R^{(3)}_1(t) &=\hbox{known terms}\\
&\quad+ \lim\limits_{t\rightarrow 0} \ell'(t) {\partial X_0 \over \partial \phi } (0, X(\ell(t), \phi(t)))  {d\over dt} \left[ \p_t (V - V_0)(X(s,\phi(t)))  |_{s=\ell(t)}\right],
\end{align*} 
Then due to $(2)$ in Lemma~\ref{lemma V derivative}, one can reconstruct the second derivative of $Q$ in the normal direction from
$$
(\ell'(0))^2\; \varepsilon \beta  \;\p_2 \nabla^\perp Q(\0).
$$
Specifically, since $\varepsilon, \beta>0,\ell'(0)>0$ are known, one then recovers $\p_2\nabla^\perp Q(\0)$.

On the other hand, recall that $\nabla Q(x)$ has been determined for any boundary point $x\in\p\Omega$, which is sufficiently close to $p=\0$. Then we can also recover its tangential derivative, that is, 
$
\p_1 \nabla Q(\0).
$
Combining the normal and tangential derivatives together, as this stage, we have recovered
	$$
	\nabla^2 Q(\0).
	$$
Similarly, we can also recover $\nabla^2 Q(x)$ for $x\in\p\Omega$ sufficiently close to $p=\0$, by the same argument above.

\subsubsection{\bf Higher derivatives w.r.t. $t$}

Another direct computation gives that
\begin{align*} 
R^{(4)}(t) &=\hbox{known terms}+\ell'(t)  {\partial X_0 \over \partial \phi} (0, X(\ell(t), \phi(t)))  {d\over dt} \left[ \p^2_t (V - V_0)(X(s,\phi(t))) |_{s=\ell(t)}\right] \\
&\quad+ \ell'(t){\p^3 \over \p t^3}\LC  {\partial X_0 \over \partial \phi} (\ell(t)-s, X(s, \phi(t)))(V - V_0)(X(s,\phi(t))) \RC\Big|_{s=\ell(t)}  \\
& \quad+  \int^{\ell(t)}_0 {\p^4\over \p t^4}\LC  {\partial X_0 \over \partial \phi} (\ell(t)-s, X(s, \phi(t)))(V - V_0)(X(s,\phi(t)))\RC\,ds\\
&=:\hbox{known terms}+R^{(4)}_1(t)+R^{(4)}_2(t)+R^{(4)}_{\text{int}}(t),
\end{align*} 
where $R^{(4)}_{\text{int}}(0)=0$ and these terms $R^{(4)}_1(t), R^{(4)}_2(t)$ are either tangential terms or terms depending only on lower order derivatives of $Q$ ($\nabla^\gamma Q$, $\gamma=1,2$) near $\0$ due to Lemma~\ref{lemma V derivative}.
Therefore, $R^{(4)}$ does not contribute any new information about $Q$, and thus we have to move on to compute $R^{(5)}$.

For $R^{(5)}(t)$, we obtain  
\begin{align*} 
\lim\limits_{t\rightarrow 0} R^{(5)}(t) &=\hbox{known terms}\\
&\quad + \lim\limits_{t\rightarrow 0}\ell'(t) {\partial X_0 \over \partial \phi } (\0, X(\ell(t), \phi(t)))  {d^2\over dt^2 } \left[\p^2_t (V - V_0)(X(s,\phi(t)))  |_{s=\ell(t)}\right] \\
&\quad+  \lim\limits_{t\rightarrow 0}\ell'(t){\p^4 \over \p t^4}\LC  {\partial X_0 \over \partial \phi} (\ell(t)-s, X(s, \phi(t)))(V - V_0)(X(s,\phi(t))) \RC\Big|_{s=\ell(t)}  \\
& \quad+  \lim\limits_{t\rightarrow 0} \int^{\ell(t)}_0 {\p^5\over \p t^5}\LC  {\partial X_0 \over \partial \phi} (\ell(t)-s, X(s, \phi(t)))(V - V_0)(X(s,\phi(t)))\RC\,ds\\
&=:\hbox{known terms}+ \lim\limits_{t\rightarrow 0}R^{(5)}_1(t)+ \lim\limits_{t\rightarrow 0}R^{(5)}_2(t)+ \lim\limits_{t\rightarrow 0}R^{(5)}_{\text{int}}(t).
\end{align*} 
Again, following a similar argument, we only need to focus on the term $ \lim\limits_{t\rightarrow 0}R^{(5)}_1(t)$ since it contains
\begin{align*} 
{d^2\over dt^2 } \left[\p^2_t (V - V_0)(X(s,\phi(t))) |_{s=\ell(t)}\right].
\end{align*} 
In particular, by (2) in Lemma~\ref{lemma V derivative}, we can recover the first component of $\lim\limits_{t\rightarrow 0}R^{(5)}_1(t)$, that is,
$$
    2(\ell'(0))^3\; \varepsilon^2 \beta^2\; \p_2^2\nabla^\perp Q(\0).
$$ 
Note that the tangential derivative $\p_1\nabla\nabla^\perp Q(\0)$ and lower order derivatives $\nabla^\gamma Q(\0),\ \gamma=1,2$ are already known. 
Then $\nabla^3 Q(\0)$ is reconstructed.

Based on the above detailed analysis on the derivative of $R(t)$ up to the $5$-th order, we are ready to show the determination of the boundary jet of $Q$ at the point $p=\0$ by the induction argument.
\begin{theorem}\label{boundary determination in local coordinates}
The measurement $\mathcal S$ in an open neighborhood of $\0$ determines $\p^j_1 \p_2^k Q(\0)$ for any $j, k\in\mathbb Z$, $j+k\geq 1$.
\end{theorem}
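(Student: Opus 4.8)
The plan is to argue by induction on the total order $N=j+k$, extracting $\nabla^{N}Q(\0)$ from the $t$-derivative $R^{(2N-1)}(0)$ of the Stefanov--Uhlmann integral $R(t)$ in \eqref{F2int1}. The cases $N=1,2,3$ have been carried out above via $R^{(1)}$, $R^{(3)}$ and $R^{(5)}$, and they reveal the governing pattern: it is the odd-order derivatives $R^{(2k+1)}(0)$ that supply genuinely new information. I would therefore adopt the induction hypothesis that for some $k\ge 1$ all derivatives $\p^\beta Q(x)$ with $1\le|\beta|\le k$ are known for every $x\in\p\Omega$ in a fixed neighborhood of $\0$. The strengthening to \emph{nearby} boundary points is essential: the mixed derivatives at the next level are produced by differentiating the lower-order data tangentially along $\p\Omega$, which is only legitimate if that data is known on a boundary arc. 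Since strict convexity w.r.t.\ $Q$ at $\0$ persists at nearby boundary points, the entire construction (the curve $\xi(\cdot)$, the exit time $\ell(\cdot)$, and the identity \eqref{F2int2}) is available at each such point as well.

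For the inductive step I would compute $R^{(2k+1)}(t)$, let $t\to 0$, and decompose it exactly as in the lower-order cases,
$$
R^{(2k+1)}(t)=\text{known terms}+R^{(2k+1)}_1(t)+R^{(2k+1)}_2(t)+R^{(2k+1)}_{\text{int}}(t).
$$
The integral term vanishes, $R^{(2k+1)}_{\text{int}}(0)=\0$, because $\ell(0)=0$; the term $R^{(2k+1)}_2$ together with the ``known terms'' involves only the explicit flow $X_0$, the measured quantities $\ell(t)$ and $X(\ell(t),\phi(t))$, and---through Lemma~\ref{lemma limit V} and Lemma~\ref{lemma V derivative}---tangential or strictly lower-order derivatives of $Q$, all of which are known by the induction hypothesis. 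The only term that can carry new information is
$$
R^{(2k+1)}_1(t)=\ell'(t)\,\frac{\p X_0}{\p\phi}\big(\0,X(\ell(t),\phi(t))\big)\,\frac{d^{k}}{dt^{k}}\left[\p_t^{k}(V-V_0)(X(s,\phi(t)))\big|_{s=\ell(t)}\right].
$$

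The heart of the matter, and the step I expect to be the main obstacle, is to show that the first (the $a_+$-)component of $\lim_{t\to0}R^{(2k+1)}_1(t)$ isolates the top normal derivative with a nonzero, explicitly known coefficient,
$$
k!\,(\ell'(0))^{k+1}\,\varepsilon^{k}\beta^{k}\,\p_2^{k}\nabla^\perp Q(\0),
$$
modulo already-known quantities; this is precisely the content of part~(2) of Lemma~\ref{lemma V derivative}. The mechanism is the scaling $\dot a_+^2(0;t)=\varepsilon\beta t$ from \eqref{choice of xi(t)}: the normal velocity is the sole source of penetration of $a_+$ into $\Omega$, so in the Fa\`a di Bruno expansion of $\p_t^{k}\nabla^\perp Q(a_+(s;t))\big|_{s=\ell(t)}$ each $t$-differentiation, evaluated along $s=\ell(t)$ (which collapses to $\0$ as $t\to0$), contributes one additional normal derivative at leading order, yielding the pure power $\p_2^{k}$ with the combinatorial weight $k!$ and the tracking factors $\varepsilon^{k}\beta^{k}(\ell'(0))^{k+1}$. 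Every competing multi-index either carries a tangential derivative $\p_1$ or is of order $\le k$, hence is known by the induction hypothesis, and the $a_-$-component contributes nothing new since $a_-(s;t)\in\R^2\setminus\widetilde\Omega$ for $s\in[0,\ell(t)]$, where $Q$ is given. Verifying that no other unknown of order $k+1$ sneaks into this coefficient is the one genuinely delicate bookkeeping task.

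Since $k!,\varepsilon,\beta,\ell'(0)>0$ are all known, solving for the isolated term recovers the full vector
$$
\p_2^{k}\nabla^\perp Q(\0)=\big(\p_2^{k+1}Q(\0),\,-\p_1\p_2^{k}Q(\0)\big),
$$
and in particular the pure normal derivative $\p_2^{k+1}Q(\0)$. Every remaining derivative of order $k+1$, namely $\p_1^{j}\p_2^{k+1-j}Q(\0)$ with $j\ge1$, is obtained by differentiating the order-$k$ quantity $\p_1^{j-1}\p_2^{k+1-j}Q$ tangentially along $\p\Omega$, which is legitimate since that quantity is known on a boundary neighborhood of $\0$ by the induction hypothesis. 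The identical reasoning applies at every nearby boundary point, so the induction hypothesis is restored at level $k+1$. By induction, $\mathcal S$ determines $\p_1^{j}\p_2^{k}Q(\0)$ for all $j+k\ge1$.
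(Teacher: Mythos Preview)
Your proposal is correct and follows essentially the same route as the paper: induction on the total order, using the odd-order derivative $R^{(2K+1)}(0)$ together with Lemma~\ref{lemma V derivative}\,(2) to isolate $K!\,(\ell'(0))^{K+1}\varepsilon^{K}\beta^{K}\,\p_2^{K}\nabla^\perp Q(\0)$ and hence $\p_2^{K+1}Q(\0)$, while the mixed $(K+1)$-th derivatives come from tangential differentiation of the lower-order data known along a boundary arc near $\0$. Your explicit articulation of why the induction hypothesis must be carried at nearby boundary points, and of the scaling mechanism behind the $k!\,\varepsilon^k\beta^k(\ell'(0))^{k+1}$ coefficient, is a helpful elaboration of exactly the structure the paper relies on.
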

\begin{proof}
We have seen that $\p_1^j\p_2^k Q(\0)$ for $j+k\leq 3$ can be determined from $R^{(1)}(t)$, $R^{(3)}(t)$ and $R^{(5)}(t)$ as $t$ goes to $0$.
Given an arbitrary integer $K\geq 3$, by induction, suppose that $\p_1^j\p_2^k Q(\0)$ for $j+k\leq K$, is recovered. Thus $\p_1^j\p_2^k Q(x)$, $j+k\leq K$, is known for $x\in\p\Omega$ sufficiently close to $\0$. This information is enough for determining $\p_1^j\p_2^k Q(\0)$ with $j+k=K+1$, $j\geq 1$.

The only unknown $(K+1)$-th derivative is the term  
$\p_2^{K+1} Q(\0)$. We differentiate $R(t)$, $2K+1$ times, w.r.t. $t$ and obtain
\begin{align*} 
R^{(2K+1)}(t) &=\hbox{known terms}\\
&\quad+\ell'(t) {\partial X_0 \over \partial \phi } (\0, X(\ell(t), \phi(t)))  {d^K\over dt^K} \left[\p^K_t (V - V_0)(X(s,\phi(t)))  |_{s=\ell(t)}\right] \\
&\quad+ \ell'(t){\p^{2K} \over \p t^{2K}}\LC  {\partial X_0 \over \partial \phi} (\ell(t)-s, X(s, \phi(t)))(V - V_0)(X(s,\phi(t))) \RC\Big|_{s=\ell(t)}  \\
& \quad+  \int^{\ell(t)}_0 {\p^{2K+1}\over \p t^{2K+1}}\LC  {\partial X_0 \over \partial \phi} (\ell(t)-s, X(s, \phi(t)))(V - V_0)(X(s,\phi(t)))\RC\,ds.
\end{align*} 
Here those known terms involve the derivatives of $Q$ with orders $\leq K$ and the tangential derivatives of $Q$. 
Therefore based on the analysis above and Lemma~\ref{lemma V derivative}, we finally recover the term 
$$
K!(\ell'(0))^{K+1} \varepsilon^{K} \beta^K\; \p_2^K\nabla^\perp Q(\0),
$$ 
which involves $\p_2^{K+1} Q(\0)$. This completes the proof.
\end{proof}

\subsubsection{\bf Proof of Theorem~\ref{boundary determination}} 
We are ready to show the local result.
\begin{proof}[Proof of Theorem~\ref{boundary determination}]
The proof follows immediately from the discussion above  and Theorem~\ref{boundary determination in local coordinates}.

\end{proof}

\section{Global reconstruction of real-analytic potentials}\label{sec:global reconstruction}
Based on the local result in Section~\ref{sec:local reconstruction}, we can now determine $Q$ globally in $\widetilde{\Omega}$.

\begin{proof}[Proof of Theorem \ref{real analytic case}]
The hypothesis of the theorem gives that $\p\Omega$ is strictly convex at some point $p\in\p\Omega$ w.r.t. the potential $Q$. By Theorem \ref{boundary determination} (also Theorem \ref{boundary determination in local coordinates}), $\p^\alpha Q(p)$ is recovered for any multi-index $\alpha$, $|\alpha|\geq 1$ from the data $\mathcal{S}$. Notice that $\widetilde\Omega$ is an open neighborhood of $\overline\Omega$, then it implies that $p$ is actually an interior point of $\widetilde\Omega$.
Since $Q$ is real-analytic in $\widetilde \Omega$, so is $\nabla Q$. Thus, we can uniquely recover $\nabla Q$ in some neighborhood of $p$. Moreover, since $\widetilde\Omega$ is path-connected and $\nabla Q$ is analytic, we can then determine $\nabla Q$ uniquely in the whole domain $\widetilde{\Omega}$.

Now for any fixed point $x\in\widetilde\Omega$, let $$\gamma: [0,T]\to \overline{\widetilde\Omega},\quad \gamma(0)=z\in \p \widetilde\Omega,\quad \gamma(T)=x$$
be a smooth curve connecting $x\in\widetilde\Omega$ with the boundary of $\widetilde \Omega$ for $T>0$. Notice that $Q$ is given outside $\widetilde\Omega$, thus $Q(z)$ is known for $z\in\p\widetilde{\Omega}$. The Fundamental Theorem of Calculus yields that
$$Q(x)=Q(z)+\int_0^T \left<\nabla Q(\gamma(t)), \dot\gamma(t)\right>\, dt,$$
which then determines $Q(x)$. Here $\left<\cdot,\cdot\right>$ is the Euclidean inner product. Since $x$ is an arbitrary point in $\widetilde\Omega$, this leads to the reconstruction of $Q$ in the whole domain $\widetilde\Omega$. Therefore, the proof of Theorem~\ref{real analytic case} is complete.
\end{proof}

\appendix
\section{Some useful lemmas}\label{sec:appendix}

The following lemmas play an important role in the calculation of $R^{(k)}(t)$ in Section~\ref{sec:local reconstruction}.
Recall that the definition of $V$ and $V_0$ in Proposition~\ref{Prop:SUid} leads to
	$$
	(V-V_0)(X(s,\phi(t))) = \LC\nabla^\perp Q(a_+(s;t)),\,-\nabla^\perp Q(a_-(s;t))\RC^T,
	$$
	and it satisfies the following limit.
	\begin{lemma}\label{lemma limit V}
		For any positive integer $k$, one has
		$$
		\lim\limits_{t\rightarrow 0} \p^k_t(V - V_0)(X(s,\phi(t))) \Big|_{s=\ell(t)}=  
		\left[\begin{array}{c}
		\0\\
		known\\
		\end{array}\right]_{4\times 1},
		$$ 
		where this $\0$ is the zero vector in $\R^2$.
	\end{lemma}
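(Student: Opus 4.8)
The plan is to reduce the entire $k$-th derivative computation, via continuity of the flow, to the evaluation of the two halves of $(V-V_0)(X(s,\phi(t)))$ at the single point $(s,t)=(0,0)$, exploiting the fact that $\ell(0)=0$ established in Lemma~\ref{derivative of exit time}. Writing $(V-V_0)(X(s,\phi(t)))=(\nabla^\perp Q(a_+(s;t)),\,-\nabla^\perp Q(a_-(s;t)))^T$, I would treat $s$ as a frozen parameter while differentiating $k$ times in $t$, and only afterwards substitute $s=\ell(t)$ and let $t\to 0$. Since $a_\pm(s;t)$ solve the smooth, collision-free ODE system \eqref{dipolepair}, standard smooth dependence on initial data and parameters guarantees that $\p_t^k(V-V_0)(X(s,\phi(t)))$ is jointly continuous in $(s,t)$ near $(0,0)$; as $(\ell(t),t)\to(0,0)$ with $\ell$ continuous and $\ell(0)=0$, the sought limit equals the value of $\p_t^k(V-V_0)(X(s,\phi(t)))$ frozen at $s=0$ and $t=0$.

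First I would treat the top two entries, coming from $a_+$. The initial condition in \eqref{dipolepair} forces $a_+(0;t)=\0$ for every $t$, so the restriction to $s=0$ gives the constant vector $\nabla^\perp Q(a_+(0;t))=\nabla^\perp Q(\0)$, independent of $t$. Because evaluation at $s=0$ commutes with $\p_t^k$, every derivative of positive order vanishes, i.e. $\p_t^k[\nabla^\perp Q(a_+(s;t))]\big|_{s=0}=0$ for $k\geq 1$. Passing to the limit through the continuity argument above then yields $\0$ for the first two components.

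For the bottom two entries, coming from $a_-$, the initial condition $a_-(0;t)=\xi_t$ places $a_-(0;\cdot)$ on the chosen smooth curve $\xi(\cdot)$ lying in $\R^2\setminus\overline{\widetilde\Omega}$, where $Q$ and hence $\nabla^\perp Q$ are known. Again using that evaluation at $s=0$ commutes with $\p_t^k$, the restriction equals $\p_t^k[\nabla^\perp Q(\xi_t)]$, which is an explicit expression in the known derivatives of $Q$ along the known curve $\xi_t$. Its value at $t=0$ is therefore a known quantity, and the continuity argument identifies the limit of the last two components with it.

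The only genuine technical point, and the step I expect to require the most care, is the justification of joint continuity (indeed smoothness) of $\p_t^k(V-V_0)(X(s,\phi(t)))$ on a full neighborhood of $(s,t)=(0,0)$, since without it one cannot interchange $\lim_{t\to 0}$ with the substitution $s=\ell(t)$. This rests on smooth dependence of the solutions $a_\pm(s;t)$ of \eqref{dipolepair} on the parameter $t$ (entering through the initial value $\xi_t$), which is valid precisely because the vortices are assumed never to collide, so the right-hand side of \eqref{dipolepair} stays smooth along the trajectory, and because $Q\in C^\infty$. Once this is in place everything reduces to reading off the two initial conditions, and no induction on $k$ is needed: the argument is uniform in $k$.
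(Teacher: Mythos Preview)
Your argument is correct. Both your proof and the paper's rest on the same core observation: since $\ell(0)=0$, the limit reduces to evaluating $\p_t^k(V-V_0)(X(s,\phi(t)))$ at $(s,t)=(0,0)$, where the initial conditions $a_+(0;t)=\0$ and $a_-(0;t)=\xi_t$ determine everything. The difference lies only in execution. The paper expands $a_+(s;t)=\dot a_+(0;t)s+O(s^2)$ in $s$, differentiates term by term in $t$, and then feeds the resulting $\lim_{t\to 0}\p_t^k a_+(s;t)|_{s=\ell(t)}=\0$ through the chain rule for $\nabla^\perp Q(a_+)$; this has the side benefit of producing the explicit identities (such as \eqref{a+ asymptotic diff}) that are reused in the next lemma. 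Your route is more economical: you invoke joint smoothness of the flow in $(s,t)$ once, then use that restriction to $s=0$ commutes with $\p_t^k$, so the top block is the $k$-th $t$-derivative of the constant $\nabla^\perp Q(\0)$ and vanishes outright, with no chain rule or Taylor expansion needed. The trade-off is that the paper's slightly longer computation sets up machinery for Lemma~\ref{lemma V derivative}, whereas your argument proves the present lemma cleanly but would still need those expansions later.
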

	\begin{proof}
		We first recall that $a_+(0;t)=\0$ and $a_-(0;t)=\xi_t$. For small $s$ and $t$, the asymptotic expansion of the trajectory $a_+(s;t)$ is as follows:
		\begin{align}\label{a+ asymptotic}
		a_+(s;t)=\dot{a}_+(0;t)s+O(s^2).
		\end{align}
	    By taking the $k$-th derivative with respect to $t$, we have
		$$
		\p_t^ka_+(s;t)={d^k\over d t^k} \dot{a}_+(0;t)s+O(s^2),
		$$
		which gives that
		\begin{align}\label{a+ asymptotic diff}
		\lim\limits_{t\rightarrow 0}\p_t^ka_+(s;t)|_{s=\ell(t)} =\lim\limits_{t\rightarrow 0}  {d^k\over d t^k}\dot{a}_+(0;t)\ell(t)+O(\ell(t)^2) =\0 
		\end{align}
		due to $\ell(0)=0$.
	 	Similarly, for $a_-(s;t)$, we have 
	 	$$
	 	\lim\limits_{t\rightarrow 0}\p_t^ka_-(s;t)|_{s=\ell(t)}=\lim\limits_{t\rightarrow 0}{d^k\over dt^k}\xi_t,
	 	$$
	 	which is known.
		Now we turn back to the matrix and obtain
		\begin{align*} 
		\lim\limits_{t\rightarrow 0}\p_t(V - V_0)(X(s,\phi(t)))\Big|_{s=\ell(t)}&=
		\lim\limits_{t\rightarrow 0} \left[ \begin{array}{c}
		\nabla \p_2 Q(a_+(\ell(t);t))\cdot  \p_ta_+(s;t)|_{s=\ell(t)}\\
		-\nabla \p_1 Q(a_+(\ell(t);t))\cdot \p_ta_+(s;t)|_{s=\ell(t)}\\
		-\nabla \p_2 Q(a_-(\ell(t);t))\cdot \p_ta_-(s;t)|_{s=\ell(t)}\\
		\nabla \p_1 Q(a_-(\ell(t);t))\cdot \p_ta_-(s;t)|_{s=\ell(t)}\\
		\end{array}\right]\\
		&=
		\left[\begin{array}{c}
		\0\\
		known\\
		\end{array}\right]_{4\times 1},
		\end{align*}
	    where we used the fact that $\lim\limits_{t\rightarrow 0}\nabla \nabla^\perp Q(a_-(\ell(t);t))=\nabla \nabla^\perp Q(\xi_0)$ is known since $\xi_0\in \R^2\setminus\widetilde\Omega$.
		
		For $k>1$, due to \eqref{a+ asymptotic diff}, we also have  
		$$\lim\limits_{t\rightarrow 0}\p^k_t(\nabla^\perp Q)(a_+(s;t)) |_{s=\ell(t)}=\0 $$ 
		and the term $$	\lim\limits_{t\rightarrow 0}\p^k_t(\nabla^\perp Q)(a_-(s;t)) |_{s=\ell(t)}$$ is known as well,
		which completes the proof.
	\end{proof}
	
Recall that $\dot{a}_+(0;t) = \varepsilon(\alpha(t),\beta t)$ with $\alpha(t)^2+(\beta t)^2=1$ in \eqref{choice of xi(t)}. Then $\alpha(t)=\pm\sqrt{1-(\beta t)^2}$ implies 
$\alpha'(0)=0$. Build upon this and \eqref{a+ asymptotic} and $\ell(0)=0$, we have
\begin{align}\label{dt pt a_+}
\lim\limits_{t\rightarrow 0}{d\over dt}\p_ta_+(s;t)|_{s=\ell(t)}=\lim\limits_{t\rightarrow 0}{d\over dt}\dot{a}_+(0;t)\ell'(t)+\0= \varepsilon (0,\beta) \ell'(0).
\end{align}

In the following lemma, we will only focus on the first component of $(V - V_0)(X(s,\phi(t)))$ since from above discussion we have seen that its second component already contains known data $ (\nabla^\perp Q)(a_-(s;t))|_{s=\ell(t)}$ for sufficiently small $t$.
	\begin{lemma}\label{lemma V derivative} 
		Let $k,\,\eta$ be integers satisfying $k\geq\eta\geq1$.  
		Then the first component of
		$$
		\lim\limits_{t\rightarrow 0}{d^\eta\over dt^\eta} \left[ \p_t^k (V - V_0)(X(s,\phi(t))) |_{s=\ell(t)}\right],
		$$
		that is, 
		$$
		 \mathcal{F}:=\lim\limits_{t\rightarrow 0}{d^\eta\over dt^\eta} \left[ \p_t^k (\nabla^\perp Q)(a_+(s;t))  |_{s=\ell(t)}\right],
		$$
		satisfies the following statements:  
		\begin{enumerate}
			\item  If $k\neq \eta$, then $\mathcal{F}$	
			only depends on the derivatives $\p^{\gamma_1}_1\p^{\gamma_2}_2 Q(\0)$ for $1\leq \gamma_1+\gamma_2\leq \eta+1$.  
			
			\item If $k=\eta$, then $\mathcal{F}$ satisfies
			\begin{align}\label{case k}
             \mathcal{F}
			 = k!  \varepsilon^k (\ell'(0))^{k}  \beta^k \p_2^k\nabla^\perp Q(\0) + \Phi,
			\end{align}
			where the remaining function $\Phi$ only depends on the tangential derivative $$\p_1\nabla^{k-1}\nabla^\perp Q(\0)$$ and lower order derivatives $\p^{\gamma_1}_1\p^{\gamma_2}_2 Q(\0)$ for $1\leq \gamma_1+\gamma_2\leq k$. 
		\end{enumerate} 
	\end{lemma}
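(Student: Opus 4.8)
The plan is to reduce the whole computation to a single chain-rule expansion along the exit-time curve, combine it with multivariate Faà di Bruno applied to the composition $\nabla^\perp Q\circ a_+$, and then read off which derivatives of $Q$ survive the limit $t\to0$. Write $b(s;t):=a_+(s;t)$, $F:=\nabla^\perp Q$ and $G(s,t):=F(b(s;t))$, so that $\mathcal F$ is the first component of $\lim_{t\to0}\frac{d^\eta}{dt^\eta}\big[(\partial_t^kG)(\ell(t),t)\big]$. First I would record the curve-derivative identity
\[
\frac{d^\eta}{dt^\eta}\big[\Phi(\ell(t),t)\big]=(\mathcal D^\eta\Phi)(\ell(t),t),\qquad \mathcal D:=\partial_t+\ell'(t)\,\partial_s,
\]
which follows by induction, since $\mathcal D$ is exactly the total-derivative-along-the-curve operator and $\partial_s$ commutes with the $t$-function $\ell'$. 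Applying it with $\Phi=\partial_t^kG$ and expanding $\mathcal D^\eta$ non-commutatively gives a finite sum $\sum_{a+b\le\eta}c_{a,b}(t)\,\partial_s^a\partial_t^{b+k}G$, where each $c_{a,b}$ is a polynomial in $\ell',\dots,\ell^{(\eta)}$, the highest attainable normal order is $a\le\eta$, and the unique top term has $c_{\eta,0}=(\ell')^\eta$.

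Next I would compute the limits of the building blocks. From the $s$-expansion $b(s;t)=\sum_{m\ge1}v_m(t)s^m$ with $v_m(t)=\tfrac1{m!}\partial_s^mb(0;t)$ (using $b(0;t)=\0$) together with $\ell(0)=0$, one obtains
\[
\lim_{t\to0}\partial_s^i\partial_t^j b(s;t)\big|_{s=\ell(t)}=\partial_s^i\partial_t^j b(0;0)\ \ (i\ge1),\qquad \lim_{t\to0}\partial_t^j b(s;t)\big|_{s=\ell(t)}=\0\ \ (i=0,\ j\ge1),
\]
the key point being that evaluation at $s=\ell(t)\to0$ annihilates every positive power of $s$. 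Since $\partial_sb(0;t)=\dot a_+(0;t)=\varepsilon(\alpha(t),\beta t)$ from \eqref{choice of xi(t)}, this yields in particular $\lim_{t\to0}\partial_s\partial_t^j b^2|_{s=\ell(t)}=\varepsilon\beta$ when $j=1$ and $\0$ otherwise.

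I would then insert the multivariate Faà di Bruno expansion of $\partial_s^a\partial_t^{b+k}G$ into the curve. Each term is $(\partial^\mu F)(b)\prod_{B}\partial_s^{a_B}\partial_t^{d_B}b^{i_B}$ evaluated at $(s,t)=(\ell(t),t)$, with $|\mu|$ equal to the number of blocks $B$. Because $(\partial^\mu F)(b(\ell(t),t))\to(\partial^\mu F)(\0)$ while any block with $a_B=0$ contributes a factor tending to $\0$, only terms in which every block carries at least one $s$-derivative survive the limit; hence the number of blocks is $\le a\le\eta$, so $|\mu|\le\eta$ and $\mathcal F$ can depend only on $\partial^\gamma Q(\0)$ with $1\le|\gamma|\le\eta+1$, which is precisely statement $(1)$.

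Finally, for $(2)$ with $k=\eta$ I would isolate the genuinely new normal derivative. The factor $\partial_2^k\nabla^\perp Q$ requires $|\mu|=k$, which forces $a=k=\eta$ (the top term, coefficient $(\ell'(t))^\eta\to(\ell'(0))^k$) and $k$ blocks each with a single $s$-derivative; distributing the $k$ remaining $t$-slots and using the block limits above, among the all-$b^2$ matchings only those with every $d_B=1$ are nonzero, the number of such matchings is $k!$, and each contributes $\varepsilon\beta$, producing $k!\,\varepsilon^k(\ell'(0))^k\beta^k\,\partial_2^k\nabla^\perp Q(\0)$. Every remaining surviving piece is either an $a=k$ term in which some block differentiates $b^1$, giving a tangential term of the form $\partial_1\nabla^{k-1}\nabla^\perp Q(\0)$, or an $a<k$ term with $|\mu|\le k-1$, contributing only $\partial_1^{\gamma_1}\partial_2^{\gamma_2}Q(\0)$ with $\gamma_1+\gamma_2\le k$; all of these are absorbed into $\Phi$. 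I expect the main obstacle to be the Faà di Bruno bookkeeping in this last step: justifying that every surviving block carries an $s$-derivative, checking that the combinatorial weight of the all-normal matchings is exactly $k!$, and confirming that no other order-$(k{+}1)$ normal derivative can appear (which rests on the linearity $\partial_sb^2(0;t)=\varepsilon\beta t$ forcing $d_B=1$). The exchange of the limit $t\to0$ with the finite derivative sum is routine given the smoothness of $a_\pm$ and $\ell$.
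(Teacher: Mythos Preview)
Your approach is correct and takes a genuinely different, more systematic route than the paper's. The paper proceeds by explicit case-by-case computation: it writes out $\partial_t^k(\nabla^\perp Q(a_+(s;t)))|_{s=\ell(t)}$ for $k=1$ and $k=2$ using the ordinary chain rule, evaluates the outer $\frac{d^\eta}{dt^\eta}$ term by term at $t=0$ using $\ell(0)=0$, $\partial_t^ja_+|_{s=\ell(t)}\to\0$, and the single key input $\lim_{t\to0}\frac{d}{dt}\partial_ta_+|_{s=\ell(t)}=\varepsilon(0,\beta)\ell'(0)$, and then declares that general $k$ follows ``along the lines of similar arguments.'' Your proposal replaces this ad~hoc expansion by two organizing devices: the operator identity $\frac{d^\eta}{dt^\eta}\Phi(\ell(t),t)=(\mathcal D^\eta\Phi)(\ell(t),t)$ with $\mathcal D=\partial_t+\ell'\partial_s$, and the set-partition form of multivariate Fa\`a di Bruno for $F\circ b$. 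This yields the uniform bound $|\mu|\le a\le\eta$ in one stroke (giving part~(1)) and isolates the $\partial_2^k$-coefficient by a clean matching count (part~(2)). The trade-off is that your argument requires honest Fa\`a di Bruno bookkeeping, while the paper's hands-on route makes the small cases transparent but leaves the general step informal.

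One point you should make explicit: the block factors $\partial_s^{a_B}\partial_t^{d_B}b(0;0)$ themselves depend on $Q$ through the ODE (for instance $\ddot a_+(0;0)$ involves $\nabla^2Q(\0)$), so tracking only the outer factor $\partial^\mu F$ is not quite enough for~(1). The fix is short: since $a_+(0;t)\equiv\0$, iterating the governing system shows $\partial_s^m b(0;t)$ depends on $\partial^\gamma Q(\0)$ only for $|\gamma|\le m$, and the $t$-differentiation at $s=0$ acts through the known curve $\xi(t)$ rather than through $Q$; as every surviving block has $a_B\le a\le\eta$, this hidden dependence stays inside the order-$(\eta+1)$ bound and does not alter the $\partial_2^k$-coefficient in~(2). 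The paper's proof is equally silent on this, so your write-up with this remark added would in fact be tighter than the original.
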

	\begin{proof}
		We first consider the case $k=1$ and $\eta=1$ and compute
		$$
		\p_t (\nabla^\perp Q(a_+(s;t))) |_{s=\ell(t)} = \nabla\nabla^\perp Q(a_+(s;t))  \p_ta_+(s;t)|_{s=\ell(t)}.
		$$
		Then from \eqref{a+ asymptotic diff} and \eqref{dt pt a_+}, we have
		\begin{align*} 
		 \lim\limits_{t\rightarrow 0}{d\over dt} \left[ \p_t (\nabla^\perp Q(a_+(s;t))) |_{s=\ell(t)}\right]  
		&= \lim\limits_{t\rightarrow 0}\nabla^2\nabla^\perp Q(a_+(\ell(t);t)) {d\over dt}a_+(\ell(t);t) \p_ta_+(s;t)|_{s=\ell(t)}  \\
		&\quad +\lim\limits_{t\rightarrow 0} \nabla\nabla^\perp Q(a_+(\ell(t);t))  {d \over dt }\p_ta_+(s;t)|_{s=\ell(t)}\\
		&=\varepsilon \ell'(0) \beta \p_2  \nabla^\perp Q(\0),  
		\end{align*}
		which shows \eqref{case k} for $k=1$.
		
		Next we consider the case $k=2$ and then take $1\leq \eta \leq 2$. 
		Note that
		\begin{align*} 
		\p^2_t (\nabla^\perp Q(a_+(s;t))) |_{s=\ell(t)} &= \nabla^2\nabla^\perp Q(a_+(s;t))  (\p_ta_+(s;t))^2|_{s=\ell(t)}\\
		&\quad +\nabla\nabla^\perp Q(a_+(s;t))\p_t^2a_+(s;t)|_{s=\ell(t)}.
		\end{align*}		
		For $k=2$ and $\eta=1$, a direct computation yields
		\begin{align*} 
		\lim\limits_{t\rightarrow 0}{d \over dt } \left[ \p^2_t (\nabla^\perp Q(a_+(s;t))) |_{s=\ell(t)}\right]  = \nabla\nabla^\perp Q(\0) \lim\limits_{t\rightarrow 0}{d\over dt} \p_t^2a_+(s;t)|_{s=\ell(t)},
		\end{align*}
		which only contains $\p_1^{\gamma_1}\p_2^{\gamma_2}Q(\0)$ for $1\leq \gamma_1+\gamma_2\leq 2$. Similarly, for $k=2$ and $\eta=2$, we have
		\begin{align*} 
		&\lim\limits_{t\rightarrow 0}{d^2 \over dt^2 } \left[ \p^2_t (\nabla^\perp Q(a_+(s;t))) |_{s=\ell(t)}\right] \\
		&= 2\lim\limits_{t\rightarrow 0} \nabla^2\nabla^\perp Q(a_+(\ell(t);t))  \LC{d \over dt }\p_ta_+(s;t)|_{s=\ell(t)}\RC^2\\
		&\quad + \hbox{tangential derivative } \p_1  \nabla\nabla^\perp Q(\0)+\hbox{lower oder derivatives}\\
		&= 2 \varepsilon^2  (\ell'(0))^2 \beta^2\p_2^2  \nabla^\perp Q(\0)\\
		&\quad + \hbox{tangential derivative } \p_1  \nabla\nabla^\perp Q(\0)+\hbox{lower oder derivatives},
		\end{align*}
		where we used \eqref{dt pt a_+} ($\lim\limits_{t\rightarrow 0}  {d \over dt }\p_ta^1_+(s;t)|_{s=\ell(t)}=0$) in the last identity. So far we have shown (1) and (2) for $k=2$.

		Now for $k>2$, we have
		\begin{align*} 
		&\p^k_t (\nabla^\perp Q(a_+(s;t))) |_{s=\ell(t)} \\
		&= \nabla^k \nabla^\perp Q(a_+(s;t))  (\p_t a_+(s;t))^k|_{s=\ell(t)} +\ldots 
	       +\nabla\nabla^\perp Q(a_+(s;t))\p^k_t  a_+(s;t)|_{s=\ell(t)}.
		\end{align*}	
		Along the lines of similar arguments for the case $k=1,2$ above, we have
		\begin{align*} 
		 \lim\limits_{t\rightarrow 0}{d^\eta \over dt^\eta } \left[ \p^k_t (\nabla^\perp Q(a_+(s;t))) |_{s=\ell(t)}\right] \qquad \hbox{for }1\leq\eta< k
		\end{align*}
		depending only on the lower derivatives $\nabla^{\gamma}\nabla^\perp Q(\0)$ for $1\leq \gamma\leq \eta$.
		Finally for $\eta=k>2$, we get
		\begin{align*} 
		&\lim\limits_{t\rightarrow 0}{d^k\over dt^k } \left[ \p^k_t (\nabla^\perp Q(a_+(s;t))) |_{s=\ell(t)}\right]\\
		&=k! \lim\limits_{t\rightarrow 0} \nabla^k\nabla^\perp Q(a_+(\ell(t);t))  \LC{d \over dt }\p_ta_+(s;t)|_{s=\ell(t)}\RC^k\\
		&\quad + \hbox{tangential derivative } \p_1  \nabla^{k-1}\nabla^\perp Q(\0)+\hbox{lower oder derivatives}, 
		\end{align*}
		and then we apply \eqref{a+ asymptotic diff} again. This completes the proof of the lemma.
	\end{proof}

\vskip1cm
\noindent \textbf{Acknowledgment.}
R.-Y. Lai is partially supported by the NSF grant DMS-1714490.

\vskip1cm

\bibliographystyle{plain}
\bibliography{dipolebib}

\end{document}